\newbox\bz@
\newdimen\bdimz@
\def\linethrough#1{\setbox\bz@=\hbox{#1}%
\bdimz@=\ht\bz@ \divide\bdimz@ by 5 \advance\bdimz@ by -\dp\bz@ \ht\bz@=\bdimz@
\leavevmode\hbox{$\overline{\overline{\box\bz@}}$\relax}}
\def \new #1 {\textcolor{Maroon}{\underline{#1}} }
\newcommand{\BEQ} {\begin{equation} }
  \newcommand{\EEQ} {\end{equation} }
\newcommand{\fin}{\hfill $\Box$}
\newcommand{\BTHM}{\begin{theorem}}
  \newcommand{\ETHM}{\end{theorem}}
\newtheorem{lemma}{Lemma}[section]
\newtheorem{theorem}{Theorem}[section]
\newtheorem{definition}{Definition}[section]
\newtheorem{proposition}{Proposition}[section]
\newtheorem{remark}{Remark}[section]
\newcommand{\Om}{\Omega}
\newcommand{\tore}{\mathbb{T}_3}
\newcommand{\expk}{e^{-i\mathbf{k \cdot x}} } 
\newcommand{ \E}{\varepsilon}
\newcommand{ \vit}{\hbox{\bf u}}
\newcommand{ \vittest }{\hbox{\bf v}}
\newcommand{ \wit}{\hbox{\bf w}}
\newcommand{ \bk}{\hbox{\bf k}}
\newcommand{ \bu}{\hbox{\bf u}}
\newcommand{ \bn}{D_N}
\newcommand{ \obu}{\overline{\bu}}
\newcommand{ \bw}{\hbox{\bf w}}
\newcommand{ \obw}{\overline{\bw}}
\newcommand{ \bH}{\mathbf{H}}
\newcommand{ \bff}{\hbox{\bf f}}
\newcommand{ \Z}{\mathbbm{Z}}
\newcommand{\moy} {\overline {\vit} }
\newcommand{\g} {\nabla }
\newcommand{\p} {\partial}
\newcommand{\x} {{\bf x}}
\newcommand{\N}{\mathbb{N}}
\newcommand{\R}{\mathbb{R}}
\newcommand{\ind}{m}
\newcommand{\witt}{\overline \wit}
\begin{document}

\title{\bf Convergence of approximate deconvolution models to the
  filtered Navier-Stokes Equations}

\author{Luigi C. Berselli \thanks{Dipartimento di Matematica Applicata
    Universit\`a di Pisa, Via F. Buonarroti 1/c, I-56127, ITALY,
    berselli@dma.unipi.it,
    http://www2.ing.unipi.it/\symbol{126}d9378.}  \and Roger
  Lewandowski\thanks{IRMAR, UMR 6625, Universit\'e Rennes 1, Campus
    Beaulieu, 35042 Rennes cedex FRANCE;
    Roger.Lewandowski@univ-rennes1.fr,
    http://perso.univ-rennes1.fr/roger.lewandowski/}} \date{}
\maketitle

\begin{abstract}
  We consider a 3D~Approximate Deconvolution Model (ADM) which belongs
  to the class of Large Eddy Simulation (LES) models. We work with
  periodic boundary conditions and the filter that is used to average
  the fluid equations is the Helmholtz one. We prove existence and
  uniqueness of what we call a ``regular weak'' solution $(\wit_N,
  q_N)$ to the model, for any fixed order $N\in\N$ of
  deconvolution. Then, we prove that the sequence $\{(\wit_N,
  q_N)\}_{N \in \N}$ converges --in some sense-- to a solution of the
  filtered Navier-Stokes equations, as $N$ goes to infinity.  This
  rigorously shows that the class of ADM models we consider have the
  most meaningful approximation property for averages of solutions of
  the Navier-Stokes equations.
\end{abstract}

MCS Classification : 76D05, 35Q30, 76F65, 76D03

\medskip

Key-words : Navier-Stokes equations, Large eddy simulation,
Deconvolution models.
\section{Introduction}
It is well-known that the Kolmogorov theory predicts that simulating
turbulent flows by using the Navier-Stokes Equations
\begin{equation}
 \label{eq:nse}
 \begin{aligned}
   \partial_t\bu+\nabla\cdot(\bu\otimes
   \bu)-\nu\Delta\bu+\nabla p=\bff,
   \\
   \nabla\cdot\bu=0,
   \\
   \vit (0, \x) = \vit_0 (\x). 
 \end{aligned}
\end{equation}
requires $\mathcal{N}=O(Re^{9/4})$ degrees of freedom, where $Re=UL
\,\nu^{-1}$ denotes the Reynolds number and $U$ and $L$ are a typical
velocity and length, respectively. This number $\mathcal{N}$ is too
large, in comparison with memory capacities of actual computers, to
perform a Direct Numerical Simulation (DNS). Indeed, for realistic
flows, such as for instance geophysical flows, the Reynolds number is
order $10^{8}$, yielding $\mathcal{N}$ of order $10^{18}$.... This is
why one aims at computing at least the ``mean values'' of the flows
fields, which are the velocity field $\vit=(u^1,u^2,u^3)$ and the
scalar pressure field $p$. This is heuristically motivated from the
fact that some gross characteristics of the flow behave in a more
orderly manner. In the spirit of the work started probably with
Reynolds, this correspond in finding a suitable computational
decomposition
\begin{equation*}
 \bu=\obu+\bu'\qquad\text{and}\qquad p=\overline{p}+p',
\end{equation*}
where the primed variables are fluctuations around the over-lined mean
fields. Fluctuations can be disregarded since generally in
applications knowledge of the mean flow is enough to extract relevant
information on the fluid motion.\par
The ``mean values'' can be defined in several ways (time or space
average, statistical averages...); in particular, if one denotes the
means fields by $\moy$ and $\overline p$, and by assuming that the
averaging operation commutes with differential operators, one gets the
filtered Navier-Stokes equations
\begin{equation}
 \label{eq:nse2}
 \begin{aligned}
   \partial_t\moy+\nabla\cdot(\overline {\bu\otimes
     \bu})-\nu\Delta\moy+\nabla \overline p=\overline \bff,
   \\
   \nabla\cdot\moy=0,
   \\
   \moy (0, \x) = \overline{\vit_0} (\x). 
 \end{aligned}
\end{equation}
This raises the question of the \textit{interior closure problem},
that is the modeling of the tensor $R(\bu) =\overline {\bu\otimes
  \bu}$ in terms of the filtered variables
$(\obu,\overline{p})$. Classical Large Eddy Simulations (LES) models
approximate $R$ by $\wit \otimes \wit - \nu_{T} ( {\bf k} / {\bf k}_c)
\mathcal{D}(\wit)$ where $\wit \approx \moy$, and $\mathcal{D}(\wit) =
(1/2) (\g \wit + \g \wit^T)$. Here $\nu_T$ is an eddy viscosity based
on a ``cut frequency'' ${\bf k}_c$ (see a general setting
in~\cite{Sag2001}). We introduce the new variable $\bw$ since when
using any approximation for $R(\bu)$, one does not write the
differential equation satisfied by $\obu$, but an equation satisfied
by another field $\bw$ which is \textit{hopefully} close enough to
$\bu$. \par
Another way, that avoids eddy viscosities, consists in approaching $R$
by a quadratic term of the form $B(\wit, \wit)$. J.~Leray~\cite{JL34}
already used in the 1930s the approximation (with our LES notation)
$B(\wit, \wit) = \overline \wit \otimes \wit $ to get smooth
approximations to the Navier-Stokes Equations. This approximations
yield the recent Leray-alpha fashion models, considered to be LES
models, and a broad class of close models (see
\textit{e.g.}~\cite{CHOT05,FHT01,BIL2006,Geu2003,LMC2005}). In
\cite{LL03,LL2006a}, we also have studied the model defined by
$B(\wit, \wit) = \overline { \wit \otimes \wit}$, which has also
strict connections with scale-similarity models.\par
The model we study in this paper, is the Approximate Deconvolution
Model (ADM), first introduced by Adams and Stolz~\cite{SA99,AS2001},
so far as we know. This model is defined by
$B(\wit,\wit)=\overline{D_N(\wit)\otimes D_N(\wit)}$.  Here the
operator $G$ is defined thanks to the Helmholtz filter
(cf.~(\ref{eq:differential_filter})-(\ref{eq:TTTOB}) below) by
$G(\vittest)=\overline{\vittest}$, where in the paper $G=
(\mathrm{I}-\alpha^2 \Delta)^{-1}$, and the operator $D_N$ has the
form $D_N=\sum_{n=0}^N(\mathrm{I} - G)^n$. Therefore, the initial
value problem we consider is:
\begin{equation} \label{eq:adm1}
  \begin{aligned}
   \partial_t \wit +\nabla\cdot(\overline {D_N(\wit) \otimes
     D_N(\wit)})-\nu\Delta\wit+\nabla q= \overline \bff,   
   \\
   \nabla\cdot\wit =0,   
   \\
   \wit (0, \x) = \overline{\vit_0} (\x), 
 \end{aligned}
\end{equation}
and we are working with periodic boundary conditions. We already
observed that the equations~\eqref{eq:adm1} are not the
equations~\eqref{eq:nse2} satisfied by $\obu$, but we are aimed at
considering~\eqref{eq:adm1} as an approximation
of~\eqref{eq:nse2}. This is mathematically sound since formally
$D_N\to A:=\mathrm{I}-\alpha^2 \Delta$, in the limit $N\to+\infty$,
hence again formally~\eqref{eq:adm1} will become the filtered
Navier-Stokes~\eqref{eq:adm2}. What is more challenging is to
understand whether this property is true or not, in the sense that one
would like to show that as the approximation parameter $N$ grows, then
\begin{equation*}
 \bw\to\obu\qquad\text{and}\qquad q\to\overline{p}.
\end{equation*}
One would like to prove that solutions of the model converge to
averages of the true quantities, since, we recall that the main goal
of LES as a computational tool is {to approximate the averages of the
  flow, which are the only interesting and computable quantities.}  To
this end we want to point out that, beside the technical mathematical
difficulties, proving results of approximation of single trajectories
\begin{equation*}
 \bw\to\bu\qquad \text{and}\qquad q\to{p},
\end{equation*}
is not really in the ``rules of the game,'' because the consistency of
the model towards single (generally strong or not computable)
solutions to the Navier-Stokes equations is not the most interesting
point. Anyway, such convergence is generally known only for a few
models, as for instance for many of the alpha-models, as
$\alpha\to0^+$. To support again this point of view, observe that
generally $\alpha>0$ is related to the smallest resolved/resolvable
scale, hence $\alpha=\mathcal{O}(h)$, where $h$ is the mesh
size. Reducing $\alpha$ will mean resolving completely the flow, hence
performing a DNS instead of a LES.\par
To our knowledge such a ``\textit{well posedness},'' i.e. proving that
$\bw\to\obu$, is not known for any LES model: To our knowledge there
are no results showing (not only formally, but also rigorously) that
the solution of a LES model $(\bw,q)$ is close or converges in some
sense to the \textit{averages} $(\obu,\overline{p})$. \par
To continue the introduction to our new results, we recall that
model~\eqref{eq:adm1} has already been considered in~\cite{LL2006b}
where we studied the residual stress. It has also been studied in
Dunca and Epshteyn~\cite{DE2006}, where it has been proved the
existence of a unique ``smooth enough'' solution for periodic boundary
conditions. In~\cite{DE2006} it is also shown that, that the sequence
of models~(\ref{eq:adm1}) goes -in a certain meaning- to the
Navier-Stokes Equations when $\alpha\to0^+$, for $N$ fixed. Notice
that the model intensively investigated
in~\cite{LL03,LL2006a,CLT2006}, where $B(\wit, \wit) = \overline {
  \wit \otimes \wit}$, is the special case $N=0$ and it is also called
simplified-Bardina, since it resembles some features of the scale
similarity models.\par
The main topic of this paper is then to study what happens when $N$
goes to infinity in~\eqref{eq:adm1}. We prove that the sequence of
models~(\ref{eq:adm1}) converges, in some sense, to the averaged
Navier-Stokes equations~\eqref{eq:nse2}, when the typical scale of
filtration (called $\alpha>0$) remains fixed and the boundary
conditions are the periodic ones. Before analyzing such convergence we
need to prove existence of smooth enough solutions. To this end we
needed to completely revisit the approach in~\cite{DE2006}.  To be
more precise, let $\mathbb{T}^3$ be the $3D$ torus and let $(\wit_N,
q_N)$, with 
\begin{equation*}
 \begin{aligned}
   &\wit_N\in L^2 ([0,T]; H^2(\tore)^3) \cap L^\infty ([0,T];
   H^1(\tore)^3), 
   \\
   &q_N\in L^2([0,T];W^{1,2}(\tore)\cap L^{5/3}([0,T];  W^{2, 5/3}(\tore)),
 \end{aligned}
\end{equation*}
denote the solution of the ADM model~(\ref{eq:adm1}), where $T$ is a
fixed time, that can eventually be taken to be equal to $\infty$,
assuming ${\bf f} \in L^2([0,T]; (H^1(\tore)^3)')$ and $\vit_0 \in
L^2(\tore)^3$, an assumption that we do throughout the paper. We are
able to prove existence (cf. Theorem~\ref{thm:existence}) in such
class and our main result is the following.
\begin{theorem}~\label{thm:Principal} From the sequence $\{(\wit_N,
  q_N)\}_{N \in \N}$ one can extract a sub-sequence (still denoted
  $\{(\wit_N, q_N)\}_{N \in \N}$) such that
 \begin{equation*}
   \begin{aligned}
     &\bw_N\to\bw\quad&\text{weakly in }L^2 ([0,T]; H^2(\tore)^3) \cap
     L^\infty ([0,T]; H^1(\tore)^3),
     \\
     &\bw_N\to\bw&\text{strongly in } L^p ([0,T];
     H^1(\tore)^3),\quad\forall \,1\leq p<+\infty,\qquad
     \\
     &q_N\to q&\text{weakly in }L^2([0,T];W^{1,2}(\tore)\cap
     L^{5/3}([0,T]; W^{2,5/3}(\tore)),
   \end{aligned}
 \end{equation*}
 and such that the system
 \begin{equation}
   \label{eq:adm2}
   \begin{aligned}
     \partial_t \wit +\nabla\cdot(\overline {A\wit \otimes
       A\wit})-\nu\Delta\wit+\nabla q= \overline \bff,
     \\
     \nabla\cdot\wit =0,
     \\
     \wit (0, \x) = \overline{\vit_0} (\x),
   \end{aligned}
 \end{equation}
 holds in the distributional sense, {where we recall that $A =
   G^{-1}=\textrm{I}-\alpha^2\Delta$}. Moreover, the following energy
 inequality holds:
 \begin{equation}
   \frac{1}{2}{d \over  dt}\| A \wit \|^2 + 
   \nu \| \g A \wit \|^2 \le \langle{\bf f}, A \wit\rangle.
 \end{equation}Ê
\end{theorem}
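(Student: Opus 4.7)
The overall strategy is the classical compactness scheme: use the uniform $N$-independent bounds furnished by Theorem~\ref{thm:existence} to extract a convergent subsequence, and then pass to the limit in every term of (\ref{eq:adm1}), with particular care for the nonlinearity that involves the deconvolution operator $D_N$.

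The first step is to observe that the bounds on $\bw_N$ and $q_N$ produced in Theorem~\ref{thm:existence} are uniform in $N$. This is because the underlying energy estimate is controlled through the pointwise Fourier multiplier inequality
\[
|\widehat{D_N}(\bk)| \;\le\; \sum_{n=0}^{\infty}\Bigl(\tfrac{\alpha^2|\bk|^2}{1+\alpha^2|\bk|^2}\Bigr)^n \;=\; 1+\alpha^2|\bk|^2 \;=\; |\widehat{A}(\bk)|,
\]
which is itself a consequence of the telescoping identity $G D_N = I - (I-G)^{N+1}$. Reading off $\partial_t \bw_N$ from (\ref{eq:adm1}) and using the two-derivative smoothing of the outer filter together with this spectral bound gives a uniform control of $\partial_t \bw_N$ in a suitable dual Bochner space (for instance $L^{5/3}([0,T];(W^{2,5}(\tore))')$). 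The compact embedding $H^2(\tore)\hookrightarrow\hookrightarrow H^1(\tore)$ and the Aubin-Lions lemma then yield, along a subsequence, the strong convergence $\bw_N \to \bw$ in $L^2([0,T];H^1)$; interpolation against the uniform $L^\infty H^1$ bound upgrades this to strong convergence in $L^p([0,T];H^1)$ for every $p<\infty$. The announced weak limits of $\bw_N$ and $q_N$ are obtained from reflexivity.

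The crux is the nonlinear term $\overline{D_N \bw_N \otimes D_N \bw_N}$. I first prove $D_N\bw_N \to A\bw$ strongly in $L^2([0,T];H^{-1})$ and weakly in $L^2([0,T];L^2)$. Using the identity $D_N = A - A(I-G)^{N+1}$ I split
\[
D_N\bw_N - A\bw \;=\; A(\bw_N-\bw) \;-\; A(I-G)^{N+1}\bw_N.
\]
The first summand tends to $0$ in $L^2 H^{-1}$ because $A:H^1 \to H^{-1}$ is continuous and $\bw_N\to\bw$ strongly in $L^2 H^1$. For the second, a low/high-frequency split is decisive: on $\{|\bk|\le K\}$ the multiplier $(\alpha^2|\bk|^2/(1+\alpha^2|\bk|^2))^{N+1}$ vanishes uniformly as $N\to\infty$, and on $\{|\bk|>K\}$ the contribution is absorbed by the $K$-tail of the uniform $L^2 H^2$ norm of $\bw_N$. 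Having this convergence, rewrite the nonlinear term tested against a smooth divergence-free $\bphi$ as
\[
-\int_0^T\!\!\int_{\tore} (D_N\bw_N \otimes D_N\bw_N) : \nabla G\bphi,
\]
using the self-adjointness of $G$; the two-derivative smoothing makes $\nabla G\bphi$ smooth of every order, and the joint strong $L^2 H^{-1}$/weak $L^2 L^2$ convergence of $D_N\bw_N$, combined with the strong convergence of $\bw_N$ itself, allows the passage to the limit to give $-\int (A\bw \otimes A\bw):\nabla G\bphi$. The linear terms pass to the limit by the weak convergences of Step~2, yielding (\ref{eq:adm2}) in $\mathcal{D}'$.

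For the energy inequality, set $\bu := A\bw = G^{-1}\bw$; applying $A$ to (\ref{eq:adm2}) and using $A\overline{\,\cdot\,}=I$ converts the filtered system into the standard 3D Navier-Stokes equations
\[
\partial_t \bu + \nabla\cdot(\bu\otimes\bu) - \nu\Delta\bu + \nabla(Aq) = \bff,
\]
so the announced inequality is exactly the Leray energy inequality for $\bu$, inherited by weak lower semi-continuity from the analogous identity for a Galerkin approximation (or equivalently from the $N$-uniform energy bound underlying Theorem~\ref{thm:existence}). \textbf{The main obstacle} is the convergence of $D_N\bw_N$ treated above: because the operator norm of $D_N$ on $L^2$ grows with $N$, there is no soft operator-continuity argument, and one must exploit the precise spectral structure of $G$ and $D_N$ through the telescoping identity together with the uniform $L^2 H^2$ bound on $\bw_N$.
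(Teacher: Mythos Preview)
Your overall strategy is sound, and your treatment of the nonlinear term takes a genuinely different route from the paper. The paper does \emph{not} argue via the spectral decomposition $D_N = A - A(I-G)^{N+1}$; instead it proves a uniform bound $\partial_t D_N(\bw_N)\in L^{4/3}([0,T];\mathbf{H}_{-1})$ (by testing the equation with $D_N\vittest$) and applies Aubin--Lions directly to the sequence $\{D_N(\bw_N)\}_{N}$, obtaining \emph{strong} convergence $D_N(\bw_N)\to{\bf z}$ in $L^p([0,T]\times\tore)$ for every $p<10/3$. The product $D_N\bw_N\otimes D_N\bw_N$ then converges strongly in $L^p$, $p<5/3$, and the identification ${\bf z}=A\bw$ is done by the duality $(D_N\bw_N,\vittest)=(\bw_N,D_N\vittest)$ together with $D_N\vittest\to A\vittest$ strongly for fixed $\vittest\in L^2([0,T];\mathbf{H}_2)$. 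Your route avoids the time-derivative estimate on $D_N\bw_N$, which is a real simplification, but the sentence ``the joint strong $L^2H^{-1}$/weak $L^2L^2$ convergence of $D_N\bw_N$, combined with the strong convergence of $\bw_N$ itself, allows the passage to the limit'' is under-justified. The strong convergence of $\bw_N$ is irrelevant here (it does not appear in $D_N\bw_N\otimes D_N\bw_N$); what actually closes your argument is that $D_N\bw_N$ is uniformly bounded in $L^2([0,T];\mathbf{H}_1)$, so that, for smooth $\bphi$, the factor $(D_N\bw_N)\,\nabla G\bphi$ is bounded in $L^2([0,T];H^1)$ and pairs against the strong $L^2H^{-1}$ convergence of $D_N\bw_N-A\bw$. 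You should state this explicitly.

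There is a more serious gap in the energy inequality. Observing that $\bu=A\bw$ solves the Navier--Stokes equations in $\mathcal{D}'$ does \emph{not} deliver the Leray energy inequality: that inequality is a feature of the \emph{construction}, not of the distributional equation, and your limit was not produced by a Galerkin scheme for NSE. The paper obtains it by passing to the limit in the exact energy \emph{equality} satisfied by each $\bw_N$,
\[
\tfrac12\tfrac{d}{dt}\|A^{1/2}D_N^{1/2}\bw_N\|^2+\nu\|\nabla A^{1/2}D_N^{1/2}\bw_N\|^2=\big(A^{-1/2}D_N^{1/2}\bff,\;A^{1/2}D_N^{1/2}\bw_N\big),
\]
identifying the weak and weak$^*$ limits of $A^{1/2}D_N^{1/2}\bw_N$ with $A\bw$, using lower semicontinuity of the norms on the left, and the strong convergence $A^{-1/2}D_N^{1/2}\bff\to\bff$ on the right. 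Your one-line ``inherited by weak lower semi-continuity from the analogous identity for a Galerkin approximation'' skips this; you must exhibit the approximate energy identity actually satisfied by your sequence and pass to the limit in it.
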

As a consequence of Theorem \ref{thm:Principal}, we deduce that the
field $(\vit, p)=(A\wit, A q)$ is a dissipative (Leray-Hopf) solution
to the Navier-Stokes Equations~\eqref{eq:nse}.
\begin{remark}
  If one rewrites system~\eqref{eq:nse2} in terms of the variables
  $\bw=\obu$ and $\bu=A\obu=A\bw$, one obtains exactly the
  system~\eqref{eq:adm2}. This is not a LES model, since it is just a
  change of variables. The LES modeling comes into the equations with
  the approximation of the operator $A$ by means of the family
  $\{D_N\}_{N\in\N}$.
\end{remark}
\noindent\textbf{Plan of the paper.} Since the paper deals mainly with
the mathematical properties of the model, we start in
Section~\ref{sec:background} by giving a precise definition of our
filter through the Helmholtz equation and we sketch a reminder of the
basic properties of the deconvolution operator $D_N$. The precise
knowledge of the filter is one of the critical points in the analysis
we will perform. We also claim that, beside some basic knowledge of
functional analysis, we have been able to simplify the proof in order
to employ just the classical energy and compactness methods. Roughly
speaking, we needed to find the correct \textit{multipliers} and --at
least in principle-- the proof of the main result should be readable
also from practitioners.\par
Then, we show in Section~\ref{sec:existence} an existence and
uniqueness result for system~(\ref{eq:adm1}). Even if this result has
already been obtained by Dunca and Epshteyn~\cite{DE2006}, our proof
is shorter and uses different arguments, useful for proving our main
convergence result. Indeed, Dunca and Epshteyn proved initially a
smart but very technical formula about $D_N$ in terms of series of
$(-\Delta)^k$, but they did not get uniform estimates in $N$. This is
why their proof cannot help for passing to the limit when $N$ goes to
infinity. Our first main observation in this paper is that one can get
very easily an estimate uniform in $N$ for $A^{1/2} D_N^{1/2} (\wit)$,
that also yields estimates for $D_N(\wit)$ and $\wit$ itself, always
uniform in $N$. The \textit{leitmotiv} of the paper is to prove
estimates \textit{independent} of $N$. \par
Finally, Section~\ref{sec:limit} is devoted to the proof of
Theorem~\ref{thm:Principal}. To prove it, we use the estimates,
uniform in $N$, obtained in the proof of the existence result. Another
main ingredient of the proof, is the derivation of an estimate for
$\p_t D_N (\wit_N)$. Good estimates for this term yield a compactness
property (\`a la Aubin-Lions) for $D_N (\wit_N)$, which allows us to
pass to the limit in the non linear term. We also note that in our
argument we keep control of the pressure, since it is needed in some
arguments and we do not simply neglect it by projecting the equations
over divergence-free vector fields.

\medskip

{\bf Acknowledgements}. The work of Roger Lewandowski is partially
supported by the ANR project 08FA300-01. Roger Lewandowski also warmly
thanks the department of applied math. of the University of Pisa for
the hospitality, where part of the work has been done. Luigi
C. Berselli gratefully acknowledges the hospitality of IRMAR,
Universit\'e Rennes 1, where part of the work has been done.

\section{General Background}\label{sec:background}
\subsection{Orientation} 
This section is devoted first to the definition of the function spaces
that we use, next to the definition of the filter through the
Helmholtz equation, and finally to what we call the ``deconvolution
operator.'' There is nothing new here that is not already proved in
former papers. This is why we restrict our-selves to what we need for
our display and we skip out proofs and technical details. Those
details can be proved by standard analysis and the reader can check
them in several references already quoted in the introduction and also
quoted below in the text.
\subsection{Function spaces}
In the sequel we will use the customary Lebesgue $L^p$ and Sobolev
$W^{k,p}$ and $W^{s,2}=H^s$ spaces. Since we work with periodic
boundary conditions we can better characterize the divergence-free
spaces we need. In fact, the spaces we consider are well-defined by
using Fourier Series on the 3D torus $\tore$ defined just below.  Let
be given $L \in \R^\star_+=\{x\in\R:\ x>0\}$, and define $\Omega :=
[0,L]^3 \subset \R^3$. We denote by $({\bf e}_1, {\bf e}_2, {\bf
  e}_3)$ the orthonormal basis of $\R^3$, and by $\x=(x_1, x_2, x_3)
\in \R^3$ the standard point in $\R^3$. We put ${\cal T}_3 := 2 \pi
\Z^3 /L $. Let $\mathbb{T}_3$ be the torus defined by $\mathbb{T}_3=
\left ( \R^3 / {\cal T}_3 \right )$.  We use $\|\cdot\|$ to denote the
$L^{2}(\mathbb{T}_3)$\ norm and associated operator norms. We always
impose the zero mean condition $\displaystyle \int_{\Omega}\phi
\,d\x=0$ on $\phi=\wit,p,\mathbf{f}$, or $\wit_{0}$.
We define, for a general exponent $s\geq0$, 
\begin{equation*}
 \mathbf{H}_{s} = \left\{\wit : \tore \rightarrow \R^3, \, \,
   \mathbf{w} \in H^{s}(\mathbb{T}_3)^3,   \quad\nabla\cdot\mathbf{w}
   = 0, \quad\int_{\mathbb{T}_3}\mathbf{w}\,d\x = \mathbf{0} \right\},
\end{equation*}
where $H^{s}(\mathbb{T}_3)^k=\big[H^{s}(\mathbb{T}_3)\big]^k$, for all
$k\in\N$ (If $0\leq s<1$ the condition $\nabla\cdot \bw=0$ 
must   be
understood in a weak sense).\par
For $\wit \in {\bf H}_s$, we can expand the velocity field in a
Fourier series
\begin{equation*}
 \wit (\mathbf{x})=\sum_{\mathbf{k} \in {\cal T}_3^\star}\widehat{\wit}_{\bf k}
 e^{+i\mathbf{k \cdot x}},\text{ where
 }\mathbf{k}\in{\cal T}_3^\star\text{ is the wave-number,}
\end{equation*}
and the Fourier coefficients are given by
\begin{equation*}
\widehat{\wit}_{\bf k}=\frac{1}{|\mathbb{T}_3 | }\int_{\mathbb{T}_3}%
\wit(\x)e^{-i\mathbf{k  \cdot x}}d\mathbf{x.}%
\end{equation*}
The magnitude of $\mathbf{k}$ is defined by
\begin{align*}
k:=&|\mathbf{k}|=\{|k_{1}|^{2}+|k_{2}|^{2}+|k_{3}|\}^{\frac{1}{2}}.
\end{align*}
We define the $\mathbf{H}_{s}$ norms by
\begin{equation*}
 \| \wit \|^{2}_{s} = \sum_{\mathbf{k} \in {\cal T}_3^\star} | \mathbf{k} |^{2s} |\widehat
 {\wit }_{\bf k}|^{2},
\end{equation*}
where of course $\| \wit \|^{2}_{0} = \| \wit \|^{2}$. The inner
products associated to these norms are
\begin{equation}\label{TQW887} 
  (\wit, \hbox{\bf v} )_{\mathbf{H}_s} = \sum_{\mathbf{k} \in {\cal T}_3^\star} 
  | \mathbf{k} |^{2s}  \widehat
  {\wit }_{\bf k}\cdot  \overline{\widehat
    {\hbox{\bf v} }_{\bf k}},
\end{equation}
where here, and without risk of confusion with the filter defined
later, $\overline{\widehat{\hbox{\bf v} }_{\bf k}}$ denotes the
complex conjugate of $\widehat{\hbox{\bf v} }_{\bf k}$. This means
that if $\widehat {\bf v}_{\bf k}= (v^1_{\bf k}, v^2_{\bf k}, v^3_{\bf
  k})$, then $\overline{\widehat{\hbox{\bf v} }_{\bf k}} =
(\overline{v^1_{\bf k}}, \overline{v^2_{\bf k}}, \overline{v^3_{\bf
    k}})$.\par
Since we are looking for real valued vectors fields, we have the
natural relation, for any field denoted by $\wit$:
\begin{equation*}
  \widehat{\wit}_{\bf k} = \overline{\widehat{\wit }_{-\bf k}}, 
  \qquad\forall \,   \mathbf{k}\in {\cal T}_3^\star. 
\end{equation*}
Therefore, our space ${\bf H}_s$ is a closed subset of the space
$\mathbb{H}_s$ of complex valued functions
\begin{equation*}
  \mathbb{H}_s = \left \{ \wit  = \sum_{\mathbf{k} \in {\cal
        T}_3^\star} \widehat \wit_{\bf k}e^{+i\mathbf{k \cdot x}}:\ \  
    \sum_{\mathbf{k} \in {\cal T}_3^\star} | \mathbf{k} |^{2s} |\widehat
    {\wit }_{\bf k}|^{2} < \infty,\ \mathbf{k}\cdot \widehat{\bw}_\mathbf{k}=0\right\},
\end{equation*}
equipped with the Hilbert structure given by~(\ref{TQW887}).  It can
be shown (see e.g.~\cite{DG1995}) that when $s$ is an integer,
$\|\wit\|^{2}_{s}=\| \nabla^{s} \wit\|^{2}$. One also can prove 
that for general $s \in \R$, $(\mathbb{H}_s)' = \mathbb{H}_{-s} $  
(see in 
\cite{RLbook}). 
\subsection{About the Filter}
We now recall the main properties of the Helmholtz filter. In the
sequel $\alpha>0$, denotes a given fixed number and $\wit
\in\mathbf{H}_{s}$.  We consider the
Stokes-like problem for $s\geq-1$:
\begin{equation}\label{eq:differential_filter}
\begin{aligned}
  -\alpha^2\Delta\witt +\witt+\nabla \pi=\wit &\qquad\text{in }\tore,
  \\
  \nabla\cdot \witt=0&\qquad\text{in }\tore,
\end{aligned}
\end{equation}
and in addition, $\displaystyle \int_{\tore}\pi \,d\x=0$ to have a
uniquely defined pressure.\par
It is clear that this problem has a unique solution $(\witt,
\pi)\in\mathbf{H}_{s+2} \times H^{s+1} (\tore)$, for any $\wit \in
\mathbf{H}_{s} $. We put $G (\bw) = \obw$, $A= G^{-1}$. Notice that
even if we work with real valued fields, $G= A^{-1}$ maps more
generally $\mathbb{H}_s$ onto $\mathbb{H}_{s+2}$.  Observe also that
-in terms of Fourier series- when one inserts
in~(\ref{eq:differential_filter})
\begin{equation*}
\wit =\sum_{\mathbf{k} \in {\cal T}_3^\star}
\widehat{\wit}_{\bf k}\,e^{+i\mathbf{k \cdot x}},
\end{equation*}
one easily gets, by searching $(\obw,\pi)$ in terms of Fourier Series,
that 
\begin{equation}\label{eq:123LKG}
 \obw(\mathbf{x})=\sum_{\mathbf{k} \in {\cal
     T}_3^\star}{1 \over 1 + \alpha^2 | {\bf k} |^2 }
 \widehat{\wit}_{\bf k}\,e^{+i\mathbf{k \cdot x}} = G(\wit), \qquad
 \text{and}\qquad\pi= \,   0. 
\end{equation}
With a slight abuse of notation,  for a scalar function $\chi$ we still
denote by $\overline \chi$ the solution of the pure Helmholtz problem
\begin{equation}\label{eq:TTTOB} 
 A \overline \chi =-\alpha^2 \Delta \overline \chi + \overline \chi =
 \chi \quad \text{in }\tore, \qquad G(\chi) = \overline \chi.
\end{equation}
and of course there are not vanishing-mean conditions to be imposed
for such cases.  This notation --which is nevertheless historical-- is
motivated from the fact that in the periodic setting and for
divergence-free vector fields the Stokes
filter~\eqref{eq:differential_filter} is exactly the same
as~\eqref{eq:TTTOB}.  Observe in particular that in the LES
model~\eqref{eq:adm1} and in the filtered
equations~\eqref{eq:nse2}-\eqref{eq:adm2}, the symbol
``$\overline{\phantom{a\cdot}}$'' denotes the pure Helmholtz filter,
applied component-by-component to the tensor fields $D_N(\wit) \otimes
D_N(\wit)$, $\bu\otimes\bu$, and $A\bw\otimes A\bw$ respectively.
\subsection{The deconvolution operator}
We start this section with a useful definition, which we shall use
several times in the sequel to understand the relevant properties of
the LES model.
\begin{definition} 
 Let $K$ be an operator acting on ${\bf H}_s$. Assume that $\expk$
 are eigen-vectors of $K$ with corresponding eigenvalues $\widehat
 K_{\bf k}$. Then we shall say that $\widehat K_{\bf k}$ is the
 symbol of $K$.
\end{definition}
The deconvolution operator $D_N$ is constructed thanks to the
Van-Cittert algorithm, and is formally defined by  
\begin{equation}  \label{eq:TTRO}
 D_N := \sum_{n=0}^N ({\rm I}-G)^n.
\end{equation}
The reader will find a complete description and analysis of the
Van-Cittert Algorithm and its variants in~\cite{RL09}. Here we just   report
 the properties we only need for the description of the model.\par
Starting from~(\ref{eq:TTRO}),  we can express the deconvolution
operator in terms of Fourier Series by the formula  
\begin{equation*}
 D_N (\wit) = \sum_{ {\bf k}\in {\cal T}_3^\star}\widehat D_N ({\bf k}) 
 \widehat  \wit_{\bf k} e^{+i\mathbf{k \cdot x}},
\end{equation*}
where
\begin{equation}\label{eq:rep_D_N}
 \begin{aligned}
   \widehat{D}_N({\bf k})=\sum_{n=0}^N\left(\frac{\alpha^2|{\bf
         k}|^2}{1+\alpha^2|{\bf k}|^2}\right)^n= (1+\alpha ^2 |{\bf
     k}|^2)\rho_{N,{\bf k}} , \quad \rho_{N,{\bf
       k}}=1-\left(\frac{\alpha^2 |{\bf k}|^2}{1+\alpha^2 |{\bf
         k}|^2}\right)^{N+1}.
 \end{aligned}
\end{equation}
The symbol $\widehat{D}_N({\bf k})$ of the operator $D_N$ satisfies the following
\begin{equation}\label{55567I}
 \text{for each } {\bf k} \in {\cal T}_3 \,   \hbox{ fixed} \qquad  \widehat{D}_N({\bf k})\to
 1+\alpha^2|{\bf k}|^2,\quad\text{as }N\to+\infty, 
\end{equation}
even if not uniformly in $\bk$. This means that $\{D_N\}_{N \in \N}$
converges to $A$ in some sense (see Lemma~\ref{lem:converges}
below). We need to specify this convergence in order to pass to the
limit more than in ``a formal way,'' to go from~\eqref{eq:adm1}
to~\eqref{eq:adm2}. A general goal for the all paper is to precisely
determine the notion of $D_N\to A$ and to obtain enough estimates on
the solution $\bw$ of~\eqref{eq:adm1} in order to perform such
limit.\par
The basic properties satisfied by $\widehat{D}_N$ that we will need
are summarized in the following lemma.
\begin{lemma}\label{lower_bound}
 For each $N\in\N$ the operator $\bn:\ \mathbf{H}_{s} \to
 \mathbf{H}_{s}$ is self-adjoint, it commutes
 with differentiation, and the following properties hold true:
 \begin{eqnarray}
   & \label{IINVC9} 1\leq \widehat{D}_N({\bf k})\leq N+1   & \forall\, {\bf k} \in {\cal T}_3, 
   \\
   & \label{JJV34}  \displaystyle \widehat D_N ({\bf k}) \approx (N+1)
   {1+\alpha^2 |{\bf k}|^2 \over \alpha^2 |{\bf k}|^2 } & \hbox{for
     large } |{\bf k}|,  
   \\
   &  \label{TT0ON}  \displaystyle  \lim_{|{\bf k}
     |\to+\infty}\widehat{D}_N({\bf k})=N+1  & \text{for fixed    }\alpha>0,  
   \\  
   & 
   \widehat{D}_N({\bf k})\leq(1+\alpha^2|{\bf k}|^2)  &  \forall\,
   {\bf k} \in {\cal T}_3,\     \alpha>0.   
 \end{eqnarray} 
\end{lemma}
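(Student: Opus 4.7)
All the claims reduce to the scalar Fourier symbol representation already recorded in (\ref{eq:rep_D_N}), so the work is algebraic rather than analytic. My strategy is first to reduce the operator-level claims (self-adjointness on $\mathbf{H}_s$, commutation with $\partial_j$, mapping $\mathbf{H}_s$ into itself) to the fact that $D_N$ is a real bounded Fourier multiplier, and then to verify each of the four pointwise bounds on $\widehat{D}_N(\mathbf{k})$ as elementary statements about the geometric sum $\sum_{n=0}^N \beta_{\mathbf{k}}^{n}$, where $\beta_{\mathbf{k}} := \alpha^2|\mathbf{k}|^2/(1+\alpha^2|\mathbf{k}|^2)\in[0,1)$.

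For the operator-theoretic part, I use that by (\ref{eq:123LKG}) the filter $G=(I-\alpha^2\Delta)^{-1}$ is the Fourier multiplier with real positive symbol $(1+\alpha^2|\mathbf{k}|^2)^{-1}$, so by (\ref{eq:TTRO}) the operator $D_N$ is itself a real Fourier multiplier, diagonal in the orthonormal Fourier basis underlying the inner product (\ref{TQW887}). Any such multiplier commutes with $\partial_j$ (itself the multiplier by $ik_j$), is self-adjoint on every $\mathbf{H}_s$ because its symbol is real, and maps $\mathbf{H}_s$ to itself whenever its symbol is bounded in $|\mathbf{k}|$, which is ensured by the forthcoming estimate (\ref{IINVC9}).

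For the symbol bounds I proceed as follows. (\ref{IINVC9}) is immediate: the $n=0$ term already gives $\widehat{D}_N(\mathbf{k})\ge 1$, while each of the $N+1$ terms $\beta_{\mathbf{k}}^n$ lies in $[0,1]$, so $\widehat{D}_N(\mathbf{k})\le N+1$. The last inequality is equally direct from $\widehat{D}_N(\mathbf{k})=(1+\alpha^2|\mathbf{k}|^2)\rho_{N,\mathbf{k}}$ together with $\rho_{N,\mathbf{k}}\in[0,1]$. For (\ref{TT0ON}), fix $\alpha>0$ and note that $\beta_{\mathbf{k}}\to 1$ as $|\mathbf{k}|\to\infty$, so each term in the finite sum tends to $1$ and $\widehat{D}_N(\mathbf{k})\to N+1$. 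For (\ref{JJV34}), I set $\delta:=1-\beta_{\mathbf{k}}=(1+\alpha^2|\mathbf{k}|^2)^{-1}$ and expand $\beta_{\mathbf{k}}^{N+1}=(1-\delta)^{N+1}=1-(N+1)\delta+O(\delta^2)$, so that $\rho_{N,\mathbf{k}}\sim(N+1)\delta$ for large $|\mathbf{k}|$, which plugged into $\widehat{D}_N(\mathbf{k})=(1+\alpha^2|\mathbf{k}|^2)\rho_{N,\mathbf{k}}$ yields the stated asymptotic equivalent. There is no substantive obstacle; the only point requiring care is reading ``$\approx$'' in (\ref{JJV34}) as asymptotic equivalence as $|\mathbf{k}|\to\infty$, which is consistent with the common limit $N+1$ also appearing in (\ref{TT0ON}).
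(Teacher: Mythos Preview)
Your proposal is correct and follows precisely the approach the paper indicates: the paper itself does not give a detailed proof, remarking only that all the claims are ``obvious'' and ``follow from direct inspection of the formula~(\ref{eq:rep_D_N}).'' Your write-up simply fills in that direct inspection, and each step is sound; the only cosmetic point is that in your derivation of~(\ref{JJV34}) the expansion actually yields $\widehat{D}_N(\mathbf{k})\sim N+1$, and one then observes (as you note at the end) that $(N+1)(1+\alpha^2|\mathbf{k}|^2)/(\alpha^2|\mathbf{k}|^2)\to N+1$ as well, so the two expressions are asymptotically equivalent.
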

All these claims are ``obvious,'' in the sense that they follow from
direct inspection of the formula~\eqref{eq:rep_D_N}. Nevertheless,
they call for some comments. A first observation is that~(\ref{TT0ON})
is a direct consequence of~(\ref{JJV34}). This says that the
$\mathbf{H}_{s}$ spaces are preserved by the operator $D_N$.  More
precisely, for all $s \ge 0$, the map
\begin{equation*}
     \wit \mapsto D_N (\wit), 
\end{equation*}
is an isomorphism which satisfies 
\begin{equation*}
 \| D_N \|_{\mathbf{H}_s} = O(N+1).
\end{equation*}
Moreover, the term $\overline{D_N (\wit) \otimes D_N (\wit)}$ in
model~(\ref{eq:adm1}) is better than the convective term
$\overline{A\bw\otimes A\bw}$ in the classical filtered Navier-Stokes
Equations, making a good hope for the model to have what we call a
unique ``regular weak'' solution (see Definition~\ref{RegSol} in the
next section) which satisfies an energy equality. This follows because
the sequence $\{D_N\}_{N\in\N}$ is made of differential operators of
zero-order approximating $A$, which is of the second-order.  The
solutions of~\eqref{eq:adm1} are stronger than the usual weak
(dissipative) Leray's solution: this is the good news. The bad news is
that high frequency modes are not under control and may blow up when
one lets $N$ to go to infinity, making very hard the question of the
limit behavior of the sequence of models~(\ref{eq:adm1}), when $N$ goes
to infinity.  \par
In the same spirit of limiting behavior of $D_N$ --as a byproduct
of~(\ref{55567I}) and~(\ref{JJV34})-- it can be shown that the
sequence of operators $\{D_N\}_{N \in \N}$ ``weakly'' converges (more
precisely one has point-wise convergence) to the operator $A$. The
following result holds true.
\begin{lemma}\label{lem:converges}
 Let $s \in \R$ and let $\bw\in \bH_{s+2}$. Then
 \begin{equation*}
   \lim_{N\to+\infty} \bn(\bw)=A\bw\qquad\text{in }\bH_s.
 \end{equation*} 
\end{lemma}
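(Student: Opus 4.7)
The plan is to reduce everything to a Fourier calculation on $\mathcal{T}_3^\star$ and close it with dominated convergence for series, using the factorization of $\widehat D_N(\mathbf{k})$ already provided in~\eqref{eq:rep_D_N}.

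First I would write both $D_N(\bw)$ and $A\bw$ in Fourier series and compute the symbol of the difference. Since $\widehat D_N(\mathbf{k}) = (1+\alpha^2|\mathbf{k}|^2)\rho_{N,\mathbf{k}}$ with $\rho_{N,\mathbf{k}} = 1 - \bigl(\tfrac{\alpha^2|\mathbf{k}|^2}{1+\alpha^2|\mathbf{k}|^2}\bigr)^{N+1}$, one gets
\begin{equation*}
\widehat D_N(\mathbf{k}) - (1+\alpha^2|\mathbf{k}|^2) = -\,(1+\alpha^2|\mathbf{k}|^2)\left(\frac{\alpha^2|\mathbf{k}|^2}{1+\alpha^2|\mathbf{k}|^2}\right)^{N+1},
\end{equation*}
so by Parseval
\begin{equation*}
\|D_N\bw - A\bw\|_s^2 = \sum_{\mathbf{k}\in\mathcal{T}_3^\star} |\mathbf{k}|^{2s}(1+\alpha^2|\mathbf{k}|^2)^2 \left(\frac{\alpha^2|\mathbf{k}|^2}{1+\alpha^2|\mathbf{k}|^2}\right)^{2(N+1)} |\widehat{\bw}_\mathbf{k}|^2.
\end{equation*}

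Next I would apply dominated convergence on the counting measure on $\mathcal{T}_3^\star$. For every fixed $\mathbf{k}\in\mathcal{T}_3^\star$ the ratio $\tfrac{\alpha^2|\mathbf{k}|^2}{1+\alpha^2|\mathbf{k}|^2}$ lies strictly in $[0,1)$, so the $(2N{+}2)$-th power goes pointwise to $0$ as $N\to\infty$; meanwhile that same power is bounded by $1$ for every $N$ and every $\mathbf{k}$. Hence each summand is dominated by
\begin{equation*}
|\mathbf{k}|^{2s}(1+\alpha^2|\mathbf{k}|^2)^2|\widehat{\bw}_\mathbf{k}|^2 \;\le\; C_\alpha\bigl(|\mathbf{k}|^{2s} + |\mathbf{k}|^{2(s+2)}\bigr)|\widehat{\bw}_\mathbf{k}|^2,
\end{equation*}
whose sum is finite because $\bw\in\mathbf{H}_{s+2}$; this is precisely why the statement requires two extra derivatives of regularity. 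Dominated convergence then gives $\|D_N\bw - A\bw\|_s \to 0$, which is the claim.

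There is really no serious obstacle here: once the symbol of $D_N - A$ is written down in the form above, the argument is entirely routine. The only point worth underlining is that the factor $(1+\alpha^2|\mathbf{k}|^2)^2$ arising from the symbol is exactly absorbed by the $\mathbf{H}_{s+2}$-regularity assumption, which explains the two-derivative gap between the space of $\bw$ and the space in which convergence takes place; this is also consistent with the formal identity $D_N\to A = \mathrm{I}-\alpha^2\Delta$ flagged in~\eqref{55567I}.
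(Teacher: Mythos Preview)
Your argument is correct and is exactly the natural proof: compute the symbol of $A-D_N$ from~\eqref{eq:rep_D_N}, then pass to the limit in the $\mathbf{H}_s$-norm by dominated convergence on $\mathcal{T}_3^\star$, the summable majorant being furnished by $\bw\in\mathbf{H}_{s+2}$ (note that $|\mathbf{k}|$ is bounded away from $0$ on $\mathcal{T}_3^\star$, so the $|\mathbf{k}|^{2s}$ term is controlled by the $|\mathbf{k}|^{2(s+2)}$ term as well).

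The paper does not actually prove this lemma---it defers to Lemma~2.5 of~\cite{LL2008}---so there is no detailed argument to compare against here. However, the very same idea is carried out explicitly later in the paper (Step~2 of Section~\ref{sec:limit}) for the time-dependent version $D_N\vittest\to A\vittest$ in $L^2([0,T];\mathbf{H}_0)$: there the authors use an $\varepsilon/2$ tail-splitting (choose $K$ so the high-frequency tail is small, then use $\rho_{N,\mathbf{k}}\to 1$ on the finitely many remaining modes), which is precisely dominated convergence written out by hand. Your presentation and the paper's are the same proof in different clothing.
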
 
The proof of this lemma is very close to the one of Lemma~2.5
in~\cite{LL2008}. Therefore, we skip the details.
\begin{remark}\label{rem:fractional}
  Since $D_N$ is self-adjoint and non-negative it is possible to
  define the fractional powers $D^{\alpha}_N$, for $\alpha\geq0$.  From
  the previous result we also obtain directly that if $\bw\in
  \mathbf{H}_s$, then 
\begin{equation*}
  \forall\,\alpha\geq0,\qquad 
  \lim_{N\to+\infty}  A^{-\alpha}D_N^\alpha(\bw)=\bw\qquad \text{in }\mathbf{H}_s.
\end{equation*}
\end{remark}

\begin{remark}
 The reader may observe that most of the properties satisfied by
 $\bn$ are also satisfied by the Yosida approximation
 \begin{equation*}
       A_\lambda:=\frac{\mathrm{I}-(\mathrm{I}+\lambda
         A)^{-1}}{\lambda},\qquad \lambda>0,
 \end{equation*}
 which is very common in the theory of semi-groups or in the calculus
 of variations. To compare the behavior of the two approximations, we
 write the explicit expression for the symbol of the Yosida
 approximation, with $\lambda=1/N$:
 \begin{equation*}
   \widehat{A}_{1/N}({\bf k})=(1+\alpha ^2|{\bf
     k}|^2)\left[1-\frac{1+\alpha^2|{\bf k}|^2}{N+1+\alpha^2|{\bf
         k}|^2}\right]=(1+\alpha^2|{\bf k}|^2)\sigma_{N,{\bf k}},\quad
   \sigma_{N,{\bf k}}=\frac{N}{N+1+\alpha^2|{\bf k}|^2}.
 \end{equation*}
 One can directly compute that the asymptotics are essentially the
 same as in Lemma~\ref{lower_bound}, but the Van Cittert operator
 $\bn$ converges to $A$ much faster than the Yosida approximation
 $A_{1/N}$, as $N$ goes to infinity.
\end{remark}
\section{Existence results} \label{sec:existence}
As we pointed out in the introduction, in this paper we consider the initial
value problem for the LES model~\eqref{eq:adm1}. %
The aim of this section is to prove the existence of a unique solution
to the system~(\ref{eq:adm1}) for a given $N$. In the whole paper,
$\alpha>0$ is fixed as we already have said, and we assume that the
data are such that
\begin{equation}  \label{GGMC0}
 \vit_0 \in \mathbf{H}_{0}, \quad {\bf f}\in  L^2([0,T]; \mathbf{H}_{-1}),
\end{equation}
which naturally yields 
\begin{equation}
 \overline{\vit_0} \in \mathbf{H}_{2}, \quad \overline{\bf
   f}\in L^2([0,T]; \mathbf{H}_{1}).
\end{equation}
We start by defining the notion of what we call a "regular weak"
solution to this system.
\begin{definition}[``Regular weak'' solution]\label{RegSol}
 We say that the couple $(\wit,q)$ is a ``regular weak'' solution to
 system~(\ref{eq:adm1}) if and only if the three following items are
 satisfied: \medskip

 \textcolor{red}{1) \underline{\sc Regularity} }

 \begin{eqnarray}
   && \label{Reg11} \wit \in   L^2([0,T];
   \mathbf{H}_{2}) \cap C([0,T];  \mathbf{H}_{1}),
   \\
   && \label{Reg12} \p_t \wit \in L^2([0,T];  \mathbf{H}_{0}) 
   \\
   && \label{Reg13} q \in L^2([0,T]; H^1(\tore)),
 \end{eqnarray}  
 \medskip

 \textcolor{red}{2) \underline {\sc Initial data} }
 \begin{equation}\label{Initial}
   \displaystyle \lim_{t \rightarrow 0}\|\wit(t, \cdot) -  \overline{\vit_0}  \|_{ \mathbf{H}_{1}} = 0,
 \end{equation}
 \medskip

 \textcolor{red}{3) \underline{\sc Weak Formulation}}

 \begin{eqnarray}
   && \label{Reg14} \forall \, \vittest \in L^2([0,T];
   H^1(\tore)^3), 
   \\ 
   &&  \label{RRQPJ}
   \begin{aligned}
     \displaystyle\int_0^T\int_{\tore} \p_t \wit \cdot \vittest - 
     \int_0^T\int_{\tore} \overline{D_N (\wit) \otimes D_N (\wit)} : \g  \vittest + \nu 
     \int_0^T\int_{\tore} \g \wit : \g  \vittest \,  
     \\ 
     \displaystyle \hskip 6cm
     +\int_0^T\int_{\tore} \g q \cdot \vittest =\int_0^T\int_{\tore} \overline{\bf f} \cdot 
     \vittest.
   \end{aligned}
 \end{eqnarray}  
\end{definition}
Almost all terms in~(\ref{RRQPJ}) are obviously well-defined thanks
to~(\ref{Reg11})-(\ref{Reg12})-(\ref{Reg13}), together
with~(\ref{Reg14}). The convective term however, needs to be checked a
little bit more carefully.  To this end, we first recall that $D_N$
maps $\mathbf{H}_{s}$ onto itself, and the Sobolev embedding implies
that $\wit \in C([0,T]; \mathbf{H}_{1}) \subset L^\infty ([0,T];
L^6(\tore)^3)$.\par
Then, we still have $D_N (\wit) \in C([0,T]; \mathbf{H}_{1}) \subset
L^\infty ([0,T]; L^6(\tore)^3)$. In particular, 
\begin{equation*}
D_N (\wit) \otimes D_N (\wit) \in
L^\infty ([0,T]; L^3(\tore)^3)^2.
\end{equation*}
Consequently, we have at least
\begin{equation*}
\overline{D_N (\wit) \otimes D_N (\wit)} \in L^\infty ([0,T]; H^2(\tore)^3)^2
\subset L^\infty ([0,T] \times \tore)^9,
\end{equation*}
which yields the integrability of $\overline{D_N (\wit) \otimes D_N
 (\wit)} : \g \vittest$ for any $\vittest \in L^2([0,T];
H^1(\tore)^3)$. 
\begin{remark}
  We point out that we use the name ``regular weak'' solution, since
  $(\bw,q)$ is a solution in the sense of distributions, but it will
  turn out to be smooth enough to be uniqueness and to satisfy an
  energy equality in place of only an energy inequality, such as in
  the usual Navier-Stokes equations.
\end{remark}
\begin{theorem}\label{thm:existence}
 Assume that~(\ref{GGMC0}) holds, $\alpha >0$ and $N \in \N$ are
 given and fixed. Then Problem~\eqref{eq:adm1} has a unique regular
 weak solution.
\end{theorem}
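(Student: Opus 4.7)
My approach is a Faedo–Galerkin scheme organized around the multiplier $AD_N(\wit)$. The point is that, for fixed $N$, this test function simultaneously (i) kills the convective term and (ii) produces a coercive energy functional. Indeed, using that $G$ is self-adjoint, that $D_N$, $G$, $A$ and $\nabla$ commute pairwise, and that $GA=\mathrm{I}$, one gets
\begin{equation*}
-\int_{\tore} \overline{D_N\wit\otimes D_N\wit}:\nabla(AD_N\wit)\,d\bx
= -\int_{\tore}(D_N\wit\otimes D_N\wit):\nabla D_N\wit\,d\bx = 0,
\end{equation*}
the last equality because $D_N\wit$ is divergence free; the pressure term vanishes for the same reason. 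The time–derivative and viscous terms yield $\tfrac12\tfrac{d}{dt}\|A^{1/2}D_N^{1/2}\wit\|^2$ and $\nu\|\nabla A^{1/2}D_N^{1/2}\wit\|^2$ respectively. Bounding $|\langle\overline{\bff},AD_N\wit\rangle|=|\langle\bff,D_N\wit\rangle|\le \|\bff\|_{\mathbf{H}_{-1}}\|\nabla A^{1/2}D_N^{1/2}\wit\|$, where I have used $\widehat{D}_N\le\widehat{A}$ from Lemma~\ref{lower_bound}, and absorbing, I obtain an a priori bound on $A^{1/2}D_N^{1/2}\wit$ in $L^\infty L^2\cap L^2 H^1$. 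Since $\widehat{D}_N\ge 1$, this translates for fixed $N$ into $\wit\in L^\infty\mathbf{H}_1\cap L^2\mathbf{H}_2$, which is exactly the target regularity.

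Concretely, I would define Galerkin approximants $\wit_m$ by truncating to the first $m$ divergence-free Fourier modes on $\tore$: since $D_N$, $G$ and the Galerkin projector are all Fourier multipliers, they commute on this finite-dimensional subspace and the resulting ODE is Lipschitz, so Cauchy–Lipschitz gives local solvability and the a priori estimate above extends the solution globally. From the equation itself, the nonlinear term $\nabla\cdot\overline{D_N\wit_m\otimes D_N\wit_m}$ lies in $L^\infty L^2$ thanks to the two–derivative smoothing of the Helmholtz filter combined with $\mathbf{H}_1\hookrightarrow L^6$, so $\partial_t\wit_m$ is bounded in $L^2\mathbf{H}_0$. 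Aubin–Lions then delivers strong convergence of a subsequence in $L^2\mathbf{H}_1$, which is more than enough to pass to the limit in the quadratic nonlinearity. The pressure $q$ is recovered by taking the divergence of the equation and solving the resulting Poisson problem on $\tore$, which yields $q\in L^2 H^1$. Continuity $t\mapsto \wit(t)\in\mathbf{H}_1$ up to $t=0$ follows from $\wit\in L^\infty\mathbf{H}_1$, $\partial_t\wit\in L^2\mathbf{H}_0$, and the energy equality obtained by re–testing the limit equation against $AD_N\wit$, which is a legal test function in the obtained regularity.

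Uniqueness uses the same multiplier: for two solutions $\wit_1,\wit_2$ with the same data, the difference $\mathbf{z}=\wit_1-\wit_2$ satisfies a linear equation whose nonlinear part, after testing against $AD_N\mathbf{z}$, reduces to commutator terms of the form $\int(D_N\mathbf{z}\otimes D_N\wit_j):\nabla D_N\mathbf{z}\,d\bx$, controlled by $\|\nabla D_N\wit_j\|_{L^3}\|D_N\mathbf{z}\|_{L^2}\|D_N\mathbf{z}\|_{L^6}$, which is integrable in time because $\wit_j\in L^2\mathbf{H}_2\cap L^\infty\mathbf{H}_1$; Grönwall then gives $\mathbf{z}\equiv0$. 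The main technical point I expect is to justify the convective cancellation rigorously, namely the commutation of $G$ with $\nabla$, the self-adjointness of $G$ and of $D_N$, and the identity $GA=\mathrm{I}$, all at the Galerkin level; once these identities are verified, and they are immediate in the Fourier-mode basis, the remainder is standard energy and compactness machinery.
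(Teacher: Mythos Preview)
Your proposal is correct and follows essentially the same route as the paper: a Galerkin scheme using the multiplier $AD_N\wit$, which cancels the convective term and yields the energy functional $\|A^{1/2}D_N^{1/2}\wit\|^2$, followed by Aubin--Lions compactness, recovery of the pressure via the Poisson (De Rham) problem, and uniqueness via the same multiplier and Gr\"onwall. The only cosmetic differences are that the paper obtains the $L^2_t\mathbf{H}_0$ bound on $\partial_t\wit_m$ by testing against $\partial_t\wit_m$ (rather than reading it off the equation as you do) and uses the Lady\v{z}henskaya $L^4$ interpolation in the uniqueness step instead of your $L^2$--$L^3$--$L^6$ H\"older split; both variants work equally well.
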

\begin{proof}
  We use the usual Galerkin method (see for instance the basics for
  the Navier-Stokes Equations in~\cite{Lio1969}). This allows to
  construct the velocity part of the solution, since the equation is
  projected on a divergence-free vector field space. The pressure is
  recovered by De Rham Theorem at the end of the process, that we
  divide into five steps:
  \begin{enumerate}
  \item[]{\sc Step 1}: we start by constructing approximate solutions
    $\wit_\ind $, solving differential equations on finite dimensional
    spaces (see Definition~\ref{TYUKA28} below);
  \item[]{\sc Step 2}: we look for bounds on $\{\wit_\ind\}_{\ind \in
      \N}$ and $\{\p_t \wit_\ind\}_{\ind \in \N}$, uniform with
    respect to $\ind\in \N$, in suitable spaces. This is obtained by
    using an energy-type equality satisfied by $A^{1/2} D_N^{1/2}(
    \wit_\ind)$. Most of these bounds will result also \textit{uniform
      in $N$}, where $N\in\N$ is the index related to the order of
    deconvolution of the model;
  \item[]{\sc Step 3}: we use the main rules of functional analysis to
    get compactness properties about the sequence $\{\wit_\ind\}_{n
      \in \N}$. This will allow us to pass to the limit when $\ind\to
    \infty$ and $N$ is fixed, to obtain a solution to the model;
  \item[] {\sc Step 4}: we check the question of the initial data;
  \item[] {\sc Step 5}: we show that the solution we constructed is
    unique thanks Gronwall's lemma.
  \end{enumerate}
  Since Step~1 and~3 are very classical, we will only sketch them, as
  well as Step~4 which is very close from what has already been done
  in~\cite{CF1988,RLbook,Tem2001}. On the other hand, Step~2 is one of
  the main original contributions in the paper and will also be useful
  in the next section. Indeed, we obtain many estimates, uniform in
  $N$, that allow us in passing to the limit when $N$ goes to
  infinity and proving Theorem~\ref{thm:Principal}. Also Step~4 needs
  some application of classical tools in a way that is less standard
  than usual. We also point out that Theorem~\ref{thm:existence}
  greatly improves the corresponding existence result in~\cite{DE2006}
  and it is not a simple restatement of those results.

  \bigskip

  \noindent \textcolor{blue}{\underline{\sc Step 1 : construction of
      velocity's approximations}.}

  \medskip

  Let be given $\ind \in \N^\star$ and define ${\bf V}_\ind$ to be the
  space of real valued trigonometric polynomial vector fields of
  degree less or equal than $n$, with vanishing both divergence and
  mean value on the torus $\tore$,
  \begin{equation}\label{TYUKA28}
    {\bf V}_\ind := \{ \wit \in  \mathbf{H}_{1}:\quad \int_{\tore}\wit (\x) \, \expk
    = {\bf 0},\quad \forall \, {\bf k}, \,    \text{with }\,
    |{\bf k}|>\ind \}.
  \end{equation}
  The space ${\bf V}_\ind$ has finite dimension, denoted by
  $d_\ind$. Moreover, ${\bf V}_{\ind} \subset {\bf V}_{\ind+1}$ and,
  in the meaning of Hilbert spaces, $\mathbf{H}_{1} = \cup_{\ind \in
    \N^\star} {\bf V}_{\ind}$. We notice that ${\bf V}_\ind$ is a
  subset of the finite dimensional space
  \begin{equation*}
    {\bf W}_\ind := \{ \wit : \tore \rightarrow  \mathbb{C}^3, \,\, \wit = 
    \sum_{ {\bf k}\in {\cal T}_3, 
      | {\bf k} | \le \ind} \widehat {\bf w}_{\bf k}\, e^{+i\mathbf{k \cdot x}} \},
  \end{equation*}
  and the space $ {\bf V}_\ind$ can be described as
  \begin{equation}\label{TP0T6}
    {\bf V}_\ind := {\bf W}_\ind \cap {\bf H}_0.
  \end{equation}
  We denote by $({\bf e}_1, \dots, {\bf e}_{d_\ind})$ an orthogonal
  basis of ${\bf V}_\ind$. Remark that this basis is not made of the
  $e^{+i\mathbf{k \cdot x}}$'s. Nevertheless, we do not need to know
  it precisely. Moreover, the family $\{{\bf e}_j\}_{j \in \N}$ is an
  orthogonal basis of ${\bf H}_0$ as well as of ${\bf H}_1$. As we
  shall see in the following, the ${\bf e}_j$'s can be chosen to be
  eigen-vectors of $A$, with $\| {\bf e}_j \| =1$.  \par
  Let $\mathbb{P}_{\ind}$ denote the orthogonal projection from ${\bf
    H}_s$ ($s=0, 1$) onto ${\bf V}_\ind$. For instance, for $\bw_0=\overline
  {\vit_0} = \sum_{j=1}^{\infty} w_{j}^0 {\bf e}_j$, we have
  \begin{equation*}
    \mathbb{P}_{\ind} (\overline {\vit_0} ) = \sum_{j=1}^{d_\ind} w_{j}^0 {\bf e}_j.
  \end{equation*}
  In order to use classical tools for systems of ordinary differential
  equations, we approximate the external force by means of a standard
  Friederichs mollifier, see e.g.~\cite{LT75,Tem2001}. Let $\rho$ be an
  even function such that $\rho\in C^\infty_0(\R)$, $0\leq\rho(s)\leq
  1$, $\rho(s)=0$ for $|s|\geq1$, and $\int_{\R}\rho(s)\,ds=1$.  Then, set
  ${\bf F}(t)=\overline{\bff}(t)$ if $t\in[0,T]$ and zero elsewhere
  and for all positive $\epsilon$ define $\overline{\bff}_\epsilon$,
  the smooth (with respect to time) approximation of
  $\overline{\bff}$, by
  \begin{equation*}
    \overline{\bff}_\epsilon(t):=\frac{1}{\epsilon}\int_{\R}\rho\left(\frac{t-s}{\epsilon}
    \right)\,{\bf F}(s)\,ds.
  \end{equation*}
  Well known results imply that if~\eqref{GGMC0} is satisfied, then
  $\overline{\bff}_\epsilon\to\overline{\bff}$ in
  $L^2([0,T];\mathbf{H}_1)$.  Thanks to Cauchy-Lipschitz Theorem, we
  know the existence of a unique function
  \begin{equation*}
    \wit_\ind(t,{\bf x}) = \sum_{j=1}^{d_\ind} w_{\ind,j} (t)\,  {\bf e}_j({\bf x})
  \end{equation*}
  and of a positive $T_\ind$ such that the vector
  $(w_{\ind,1}(t),\dots,w_{\ind,d_\ind}(t))$ is a
  $C^1$ solution on $[0,T_\ind]\subseteq[0,T]$, with $w_{\ind,j} (0) =
  w_{\ind,j}^0$, in the sense that $\forall \, \vittest \in {\bf V}_\ind$, $\forall
  \, t \in [0,T_\ind]$ it holds
  \begin{equation}\label{RQPJ89}
    \begin{array} {l}
      \displaystyle \int_{\tore} \p_t \wit_\ind (t, \x) \cdot \vittest (\x)\, d\x  - 
      \int_{\tore} (\overline{D_N (\wit_\ind) \otimes D_N (\wit_\ind)})(t,
      \x) : \g  \vittest (\x)\, d\x \, 
      \\ 
      \displaystyle \hskip 4.5cm +\nu 
      \int_{\tore} \g \wit_\ind (t, \x) : \g  \vittest (\x)\, d\x \,  
      = \int_{\tore} \overline{\bf f}_{1/\ind} (t, \x) \cdot 
      \vittest (\x)\, d\x,
    \end{array} 
  \end{equation}
  where
  \begin{equation*}
    \p_t \wit_\ind = \sum_{j=1}^{d_\ind} \frac{d w_{\ind,j}(t)}{dt} \, {\bf e}_j .
  \end{equation*}
  As we shall see it in step 2, we can take $T_m = T$.  This ends the
  local-in-time construction of the approximate solutions
  $\bw_\ind(t,\x)$.  \fin
  \begin{remark}
    We want to stress to the reader's attention that a more precise
    notation would be
    \begin{equation*}
      \bw_{\ind,N,\alpha},
    \end{equation*}
    instead of $\bw_\ind$. We are asking for this simplification to
    avoid a too heavy notation, since in this section both $N$ and
    $\alpha$ are fixed.
  \end{remark}
  \bigskip

  \noindent\textcolor{blue}{\underline {\sc Step 2. Estimates}.}

  \medskip

  As in the classical Galerkin method, we need some \textit{a priori}
  estimates, first to show that the solution of the $(d_\ind\times
  d_\ind)$-systems of ordinary differential equations satisfied by
  $w_{\ind,j}$ exists in some non-vanishing time-interval, not
  depending on $\ind$ (to this end a energy-type estimate is
  enough). Next, we want to obtain estimates on the $\wit_\ind$'s and
  the $\p_t\wit_\ind$ for compactness properties, to pass to the
  limit, when $\ind \to \infty$ and $N$ is still kept fixed. \par
  As usual, we need to identify suitable test vector fields
  in~(\ref{RQPJ89}) such that, the scalar product with the nonlinear
  term vanishes (if such choice does exist). We observe that the
  natural candidate is $AD_N (\wit_\ind)$. Indeed, since $A$ is
  self-adjoint and commutes with differential operators, it holds:
  \begin{equation*}
    \begin{array} {l}
      \displaystyle  \int_{\tore} (\overline{D_N (\wit_\ind) 
        \otimes D_N (\wit_\ind)})
      : \g  (AD_N (\wit_\ind))
      \,d\x 
      \\
      = \displaystyle  \int_{\tore} G\big({D_N (\wit_\ind) 
        \otimes D_N (\wit_\ind)}\big)
      : \g  (AD_N (\wit_\ind) )
      \,d\x 
      \\ 
      = \displaystyle  \int_{\tore} (AG)\big({D_N (\wit_\ind) 
        \otimes D_N (\wit_\ind)}\big)
      : \g  (D_N (\wit_\ind) )
      \, d\x = 0,
    \end{array}
  \end{equation*}
  because $A\circ G = {\rm Id}$ on $\mathbf{H}_s$, $\g \cdot \big(D_N
  (\wit_\ind)\big)= 0$, and thanks to the periodicity.  This yields
  the equality
  \begin{equation}\label{TTFX34}
    \big(\p_t \wit_m , AD_N (\wit_\ind) \big) - \nu \big(\Delta \wit_\ind,
    AD_N (\wit_\ind)\big) = (\overline {\bf f}_{1/\ind} , AD_N (\wit_\ind)\big).
  \end{equation}
  This formal computation asks for two clarifications:
  \begin{enumerate}
  \item[i)] We must check that $AD_N (\wit_\ind)$ is a ``legal'' test
    function, to justify the above formal procedure. This means that
    for any fixed time $t$, we must prove that $AD_N (\wit_\ind) \in
    {\bf V}_\ind$.
  \item[ii)] Estimate~(\ref{TTFX34}) does not give a direct
    information about $\wit_\ind$ itself and/or $\p_t
    \wit_\ind$. Therefore one must find how to deduce suitable estimates
    from it.
  \end{enumerate}

  \medskip

  Point i) is the most simple to handle. On one hand, we already know
  that $G({\bf H}_0) = {\bf H}_2 \subset {\bf H}_0$. On the other
  hand, formula~(\ref{eq:123LKG}) makes sure that $G({\bf W}_\ind)
  \subset {\bf W}_\ind$. We now use representation~(\ref{TP0T6}), and
  we deduce that $G({\bf V}_\ind ) \subset {\bf V}_\ind$. Finally, it
  is clear that $\hbox{Ker}(G) = {\bf 0}$. Therefore, since ${\bf
    V}_\ind$ has a finite dimension, we deduce that $G$ is an
  isomorphism on it. Then the space ${\bf V}_\ind$ is stable under the
  action of the operator $A$ as well as under that of $D_N$. This
  makes $AD_N (\wit_\ind) (t, \cdot) \in {\bf V}_\ind$ a ``legal''
  multiplier in formulation~(\ref{RQPJ89}), for each fixed
  $t$. Moreover, since $A$ and $D_N$ are self-adjoint operator that
  commute, one can choose the basis $({\bf e}_1, \cdots, {\bf
    e}_{d_\ind}, \cdots )$ such that each ${\bf e}_j$ is still an
  eigen-vector of the operator $A$ and $D_N$ together. Therefore, the
  projection $\mathbb{P}_{\ind}$ commutes with $A$ as well as with all
  by-products of $A$, such as $D_N$ for instance. We shall use this
  remark later in the estimates.

  \medskip

  The next point ii) is not so direct and constitutes the heart of the
  matter of this paper. The key observation is that the following
  identities hold:
  \begin{eqnarray}
    && \label{TTY1} \big(\p_t \wit_\ind , AD_N (\wit_\ind)
    \big) = {1 \over 2}{d \over dt} \| A^{1 \over 2}D_N^{1 \over 2}(
    \wit_\ind)\|^2,
    \\
    && \label{TTY2} \big(-\Delta \wit_\ind, AD_N (\wit_\ind)\big) = \| \g
    A^{1 \over 2}D_N^{1 \over 2} (\wit_\ind) \|^2,
    \\
    && \label{TTY3} \big(\overline {\bf f}_{1/\ind} , AD_N (\wit_\ind)\big) =
    \big( A^{1 \over 2}D_N^{1 \over 2}( \overline {\bf f}_{1/\ind}), A^{1 \over
      2}D_N^{1 \over 2}( \wit_\ind) \big).
  \end{eqnarray}
  These equalities are straightforward because $A$ and $D_N$ both
  commute, as well as they do with all differential
  operators. Therefore~(\ref{TTFX34}) or better
  \begin{equation*}
    {1 \over 2}{d \over dt} \| A^{1 \over 2}D_N^{1 \over 2}(
    \wit_\ind)\|^2 
    +\| \g A^{1 \over 2}D_N^{1 \over 2}
    (\wit_\ind) \|^2= ( A^{1 \over 2}D_N^{1 \over 2}\big( \overline {\bf f}_{1/\ind}), 
    A^{1 \over 2}D_N^{1 \over 2}( \wit_\ind) \big)
  \end{equation*}
  shows that the natural quantity under control is $A^{1 \over
    2}D_N^{1 \over 2} (\wit_\ind)$. As we shall see in the remainder,
  norms of this quantity do control $\wit_\ind$, as well as the natural
  key variable $D_N(\wit_\ind)$. Finally, this yields an estimate for
  $\p_t \wit_\ind$.

  \medskip

  Since we need to prove many \textit{a priori} estimates, for the
  reader's convenience we organize the results in the following
  Table~\eqref{eq:Estimates}.  We hope that having a bunch of
  estimates collected together will help in understanding the
  result. In a first reading one can skip the proof of the
  inequalities, in order to get directly into the core of the main
  result. \par
  The results are organised as follows: In the first column we have
  labeled the estimates. The second column precises the variable under
  concern. The third one explains the bound in term of space
  function. The title of the space means that the considered sequence
  is bounded in this space. To be more precise, $E_{\ind,N} \in F$ for
  any variable $E_{\ind,N}$ and a space $F$, means that the sequence
  $\{E_{\ind,N}\}_{(\ind,N) \in \N^2}$ is bounded in the space $F$.
  Finally the fourth column precises the order in terms of $\alpha$,
  $\ind$, and $N$ for each bound. Of course each bound is of order of
  magnitude
  \begin{equation*}
    O\Big( \| \vit _0\|_{L^2} + { 1 \over \nu } \| {\bf f} \|_{L^2([0,T];L^2)}\Big),
  \end{equation*}
  and this why we do not mention it in the table. We also stress that
  all bounds are uniform in $\ind$. All bounds
  except~(\ref{eq:Estimates}-h) are uniform also in $N$. Moreover, we
  shall also see that they are uniform in $T$ yielding, $T_m = T$ for
  each $T$. We mention that we could take $T = \infty$ at this level
  of our analysis, so far the source term ${\bf f}$ is defined on $[0,
  \infty[$.
  \begin{equation}\label{eq:Estimates}
    \begin{array} {  @{}>{\vline \hfill  } c @{\,
          \, \,  }>{\vline  \hfill} c @{\, \, \,  }>{\vline \hfill } c
        @{\, \, \,  }>{\vline \, \, \,  } c   @{\hfill \vline  }} 
      \hline  
      \,\,  \hbox{Label} & \hbox{Variable} &\hbox{   bound          }
      \hskip 2cm ~  \hfill &      \hfill\hbox{order} \quad ~  
      \\ 
      \hline 
       \,a)\ \  \phantom{} & \, A^{1 \over 2}D_N^{1 \over 2} (\wit_\ind) & \, \,
      L^\infty([0,T];  \mathbf{H}_{0}) \cap L^2([0,T];  \mathbf{H}_{1})   &
      \hskip 1cm O(1)   
      \\ 
      \hline b)\ \  \phantom{}& D_N^{1/2}(\wit_\ind) \, \, \,  & L^\infty([0,T];
      \mathbf{H}_{0}) \cap L^2([0,T];  \mathbf{H}_{1})   & \hskip 1cm O(1)  
      \\
      \hline c )\ \  \phantom{}& D_N^{1/2}(\wit_\ind) \, \, \,  & L^\infty([0,T];
      \mathbf{H}_{1}) \cap L^2([0,T];  \mathbf{H}_{2})   & \hskip 1cm
      O(\alpha^{-1})
      \\ 
      \hline d)\ \  \phantom{} & \wit_\ind \, \, \, \, \, \,  & L^\infty([0,T];
      \mathbf{H}_{0}) \cap L^2([0,T];  \mathbf{H}_{1})   &\hskip 1cm O(1)
      \\
      \hline
      e )\ \  \phantom{}& \wit_\ind \, \, \, \, \, \,  & L^\infty([0,T];  \mathbf{H}_{1})
      \cap L^2([0,T];  \mathbf{H}_{2})   & \hskip 1cm O(\alpha^{-1}) 
      \\ 
      \hline 
      f)\ \  \phantom{} & D_N (\wit_\ind) \, \, \, & L^\infty([0,T];
      \mathbf{H}_{0}) \cap L^2([0,T];  \mathbf{H}_{1})   & \hskip 1cm O(1) 
      \\
      \hline 
      g)\ \  \phantom{} & D_N (\wit_\ind) \, \, \, & L^\infty([0,T];
      \mathbf{H}_{1}) \cap L^2([0,T];  \mathbf{H}_{2})   & O(\alpha^{-1} +
      (N+1)) \,~ 
      \\
      \hline 
      h)\ \  \phantom{} &\p_t \wit_\ind \, \, \, & L^2([0,T];
      \mathbf{H}_{0})   \hskip 1cm ~  &  \hskip 1cm O(\alpha^{-1}) 
      \\ 
      \hline 
    \end{array}
  \end{equation}

  \medskip

  \underline{\sl Checking~(\ref{eq:Estimates}-a) } --- For the
  simplicity, we shall assume that ${\bf f}\in L^2([0, T] \times
  \tore)^3$, but the proof holds true also for ${\bf f}\in L^2([0,
  T];\mathbf{H}_{-1})$: one has to substitute in~\eqref{TTY3} the
  integral over $\tore$ with the duality pairing $\langle\,.\,\rangle$
  between $\mathbf{H}_1$ and $\mathbf{H}_{-1}$ and estimate in a
  standard way the quantity $\big\langle\overline {\bf f}_{1/\ind} , AD_N
  (\wit_\ind)\big\rangle = \big\langle A^{1 \over 2}D_N^{1 \over 2}(
  \overline {\bf f}_{1/\ind}), A^{1 \over 2}D_N^{1 \over 2}( \wit_\ind)
  \big\rangle$. When one integrates~(\ref{TTFX34}) with respect to
  time on the time interval $[0,t]$ for any time $t\le T_m$, (by
  using~(\ref{TTY1})-(\ref{TTY2})-(\ref{TTY3}), and Cauchy-Schwartz
  inequality) one gets

  \begin{equation}\label{HHW12}
    \begin{aligned}
      \frac{1}{2}\|A^{1 \over 2} D_N^{1 \over 2} (\wit_\ind)(t, \cdot)
      \|^2 &+ \nu \int_0^t \|\g A^{1 \over 2}D_N^{1 \over 2}
      (\wit_\ind) \|^2\,d\tau
      \\
      &\le \frac{1}{2}\|A^{1 \over 2}D_N^{1 \over 2} \mathbb{P}_{\ind}
      \overline{\vit_0}\|^2 + \int_0^t\| A^{1 \over 2}D_N^{1 \over 2}
      (\overline {\bf f}_{1/\ind}) \| \cdot \| A^{1 \over 2}D_N^{1 \over 2}
      (\wit_\ind) \|\,d\tau.
    \end{aligned}
  \end{equation}

  Notice that $A^{1 \over 2}D_N^{1 \over 2}( \overline {\bf
    f}_\epsilon) = A^{-{1 \over 2}}D_N^{1 \over 2}( {\bf
    f}_\epsilon)$. Since the operator $ A^{-{1 \over 2}}D_N^{1 \over
    2}$ has for symbol $\rho_{N, {\bf k}}^{1/2} \le 1$, then $ \| A^{1
    \over 2}D_N^{1 \over 2} \overline {\bf f}_\epsilon \| \le C\|{\bf f} \|$
  (cf. also Remark~\ref{rem:fractional} and the properties of
  classical mollifiers). Since $\mathbb{P}_{\ind}$
  commutes with $A$ and $D_N$ we also have
  \begin{equation*}
    \|A^{1 \over 2}D_N^{1 \over 2}
    \mathbb{P}_{\ind} \overline{\vit_0}\| = \| \mathbb{P}_{\ind}A^{1 \over
      2}D_N^{1 \over 2} \overline{\vit_0}\| \le \| A^{1 \over 2}D_N^{1
      \over 2} \overline{\vit_0}\| \le \| \vit_0\|.
  \end{equation*}
  By using Poincar\'e's inequality combined with Young's inequality,
  and standard properties of mollifiers, one gets
  \begin{equation} \label{RROPL} 
    \frac{1}{2}\|A^{1 \over 2} D_N^{1 \over
      2} (\wit_\ind)(t, \cdot) \|^2 + {\nu \over 2} \int_0^t \|\g A^{1
      \over 2}D_N^{1 \over 2} (\wit_\ind) \|^2\,d\tau \le C ( \| \vit_0
    \|,\, \| {\bf f} \|_{L^2 ([0,T]; \mathbf{H}_{-1}) }).
  \end{equation}
  When one returns back to the definition of $\wit_\ind$, one obtain
  (as a by product of~(\ref{RROPL}) and also because the ${\bf e}_j$'s
  are eigen-vectors for $A$ and $D_N$ and therefore for $A^{1 \over 2}
  D_N^{1 \over 2}$) that
  \begin{equation*}
    \sum_{j= 1}^{d_m} \rho_{N, {j}} w_{m,j} (t)^2
    \le C ( \| \vit_0 \|, \| {\bf f} \|_{L^2 ([0,T]; \mathbf{H}_{-1}) }),
  \end{equation*}
  making sure that one can take $T_m = T$, since no $\rho_{N, {j}}$
  vanishes.

  \medskip

  \underline{\sl Checking~(\ref{eq:Estimates}-b)-(\ref
    {eq:Estimates}-c)} --- Let $\vittest \in {\bf H}_2 $. Then, with
  obvious notations one has
  \begin{equation*}
    \|A^{1 \over 2} \vittest \|^2 = \sum_{{\bf k} \in {\cal T}_3^\star }
    (1 + \alpha^2 | {\bf k} |^2) | \widehat \vittest_{\bf k}|^2 = \|
    \vittest \|^2 + \alpha^2 \|\g \vittest \|^2.
  \end{equation*}
  It suffices to apply this identity to $\vittest = D_N^{1 \over 2}
  (\wit_\ind)$ and to $\vittest = \p_i D_N^{1 \over 2} (\wit_\ind)$
  ($i=1,2,3$) in~(\ref{HHW12}) to get the claimed result.

  \medskip

  \underline{\sl
    Checking~(\ref{eq:Estimates}-d)-(\ref{eq:Estimates}-e)} --- This
  is a direct consequence
  of~(\ref{eq:Estimates}-b)-(\ref{eq:Estimates}-c) combined
  with~(\ref{IINVC9}), that we also can understand as
  \begin{equation*}
    \| \wit \|_s \le  \| D_N (\wit) \|_s \le 
    (N+1)  \| \wit \|_s,
  \end{equation*}
  for general $\wit$ and for any $s \ge 0$.  This explains how important
  is to have a ``lower bound'' for the operator $D_N$, since estimates
  on $D_N^{\alpha}\bw$, $\alpha>0$, are in inherited by $\bw$.

  \medskip

  \underline{\sl Checking~(\ref{eq:Estimates}-f)} --- The operator
  $A^{{1 \over 2}}D_N^{1 \over 2}$ has for symbol $(1+ \alpha^2 | {\bf
    k} |^2)\rho_{N, {\bf k}}^{1/2} $ while the one of $D_N$ is $(1+
  \alpha^2 | {\bf k} |^2)\rho_{N, {\bf k}}$. Since $ 0 \le \rho_{N,
    {\bf k} } \le 1$, then $\| D_N (\wit) \|_s \le \| A^{{1 \over
      2}}D_N^{1 \over 2}(\bw) \|_s$ for general $\wit$ and for any $s
  \ge 0$.  Therefore, the estimate~(\ref{eq:Estimates}-f) is still a
  consequence of~(\ref{eq:Estimates}-a).  \medskip

  \underline{\sl Checking~(\ref{eq:Estimates}-g)} --- This follows
  directly from~(\ref{eq:Estimates}-e) together
  with~(\ref{IINVC9}). This also explains why the result depends on
  $N$, since we are using now the \textit{upper bound} on the norm of
  the operator $D_N$, while in the previous estimates, we used
  directly the equation as well as the \textit{lower bound} for the
  Van Cittert operator $\bn$.
  \begin{remark}
    The fact that this estimate is valid for each $N$, but the bound
    may grow with $N$ is the main source of difficulties in passing to
    the limit as $N\to+\infty$. Also the lack of this uniform bound
    requires some work to show that (certain sequences of) solutions
    to~\eqref{eq:adm1} converge to the average of a weak (dissipative)
    solution of the Navier-Stokes equations.
  \end{remark}

  \medskip

  \underline{\sl Checking~(\ref{eq:Estimates}-h)} --- Let us take
  $\p_t \wit_\ind \in {\bf V}_\ind$ as test vector field
  in~(\ref{RQPJ89}). We get
  \begin{equation*}
    \|\p_t \wit_\ind \|^2 + \int_{\tore} {\bf A}_{N,\ind}
    \cdot \p_t \wit_\ind + \frac{\nu}{2}{d \over dt} \| \g \wit_\ind \|^2 =
    \int_{\tore} \overline{\bf f}_{1/\ind} \cdot \p_t \wit_\ind,
  \end{equation*}
  where
  \begin{equation} \label{RTP09} {\bf A}_{N,\ind} := \overline {\g
      \cdot \big(D_N (\wit_\ind) \otimes D_N( \wit_\ind)\big)} .
  \end{equation}
  So far $\wit _\ind (0, \cdot) = \mathbb{P}_{\ind} (\overline {
    \vit_0})\in {\bf H}_2$ and obviously $\| \mathbb{P}_{\ind}
  (\overline { \vit_0}) \|_2 \le C \alpha^{-1}\| \vit_0\|$, we only
  have to check that ${\bf A}_{N,\ind} $ is bounded in $L^2
  ([0,T]\times \tore)^3$.  Thanks to~(\ref{eq:Estimates}-f), it is
  easy checked with classical interpolation inequalities, that $D_N
  (\wit_\ind) \in L^4 ([0,T];L^3(\tore)^3)$.  Therefore, $ D_N
  (\wit_\ind) \otimes D_N (\wit_\ind) \in L^2 ([0,T];
  L^{3/2}(\tore)^9)$.  Because the operator $(\g \cdot) \circ G$ makes
  to ``gain one derivative,'' we deduce that ${\bf A}_{N,\ind} \in L^2
  ([0,T]; W^{1, 3/2}(\tore)^3)$, which yields to ${\bf A}_{N,\ind} \in
  L^2 ([0,T]\times \tore)^3$ since $W^{1, 3/2}(\tore) \subset
  L^3(\tore) \subset L^2(\tore)$ and $L^2([0,T]; L^2 (\tore)^3)$ is
  isomorphic to $L^2 ([0,T]\times \tore)^3$
  (see~\cite{RLbook}). Moreover, the bound is of order
  $O(\alpha^{-1})$ as well, because of the norm of the operator $(\g
  \cdot) \circ G$ that we do not need to specify, but where $\alpha$
  is involved.  The remainder of the proof is a very classical trick,
  based on Gronwall's lemma (see for instance in~\cite{RL09} for a
  detailed report about this method). Notice that this bound on the
  nonlinear term is not optimal, but fits with our requirements.  \fin

  \bigskip

  \noindent\textcolor{blue}{\underline{\sc Step 3 : Passing to the
      limit in the equations when $\ind \to \infty$, and $N$ is
      fixed}.}

  \medskip

  Thanks to the bounds~(\ref{eq:Estimates}) and classical tricks, we
  can extract from the sequence $\{\wit_\ind\}_{n \in \N}$ a
  sub-sequence converging to a $\wit \in L^\infty([0,T];
  \mathbf{H}_{1}) \cap L^2([0,T]; \mathbf{H}_{2})$. Using Aubin-Lions
  Lemma (here one uses~(\ref{eq:Estimates}-h) and again classical
  tricks), one has:
  \begin{eqnarray}
    && \wit_\ind \rightarrow \wit  \hbox{ weakly in }
    L^2([0,T];  \mathbf{H}_{2}), 
    \\
    && \label{0HE3} \wit_\ind \rightarrow \wit \hbox{ strongly in }
    L^p([0,T];  \mathbf{H}_{1}) ,\quad \forall\,  p<\infty, 
    \\
    && \p_t \wit_\ind \rightarrow \p_t \wit \hbox{ weakly in } L^2([0,T];
    \mathbf{H}_{0}).  
  \end{eqnarray} 
  This already implies that $\wit$
  satisfies~(\ref{Reg11})-(\ref{Reg12}). From~(\ref{0HE3}) and the
  continuity of $D_N$ in ${\bf H}_s$ we get that $D_N (\wit_\ind)$
  converges strongly to $D_N (\wit)$ in $L^4 ([0,T] \times
  \tore)$. Then, $D_N(\wit_\ind) \otimes D_N (\wit_\ind) $ converges
  strongly to $D_N(\wit) \otimes D_N (\wit) $ in $L^2 ([0,T] \times
  \tore)$. This convergence of $\bw$, together with the fact that
  $\overline{\bff}_{1/\ind}$ converges strongly, implies that for all $\vittest \in
  L^2([0,T] ; {\bf H}_1)$
  \begin{equation} \label{VARFORM}
    \begin{array} {l}
      \displaystyle\int_0^T\int_{\tore} \p_t \wit
      \cdot \vittest \,d\x\, d\tau- \int_0^T\int_{\tore} \overline{D_N (\wit) \otimes
        D_N (\wit)} : \g
      \vittest \,d\x\, d\tau\\
      \hskip 4.5cm+ \displaystyle \nu \int_0^T\int_{\tore} \g \wit : \g \vittest \,d\x\, d\tau 
           =\int_0^T\int_{\tore} \overline{\bf f}
      \cdot \vittest \,d\x\, d\tau. 
    \end{array}
  \end{equation}
  Arguing similarly to~\cite{RL09}, we easily get that $\wit$
  satisfies~(\ref{Initial}).

  We are almost in order to introduce the pressure.  Before doing
  this, let us make so comments about the variational formulation
  above.  We decided to take test vector fields in $L^2 ([0,T];
  \mathbf{H}_1)$ to be in accordance with classical presentations. The
  regularity of $\wit$ however yields $\overline { \g \cdot \big(D_N
    (\wit) \otimes D_N (\wit)\big) } \in L^2 ([0,T]\times \tore)^3 $
  as well as $\Delta \wit \in L^2 ([0,T]\times \tore)^3 $.
  Consequently, one can take vector test fields $\vittest \in
  L^2([0,T] ; {\bf H}_0)$ in formulation~(\ref{VARFORM}) that we can
  rephrased as: $\forall \, \vittest \in L^2([0,T] ; {\bf H}_0)$,
  \begin{equation} \label{VARFORM2} 
    \displaystyle\int_0^T\int_{\tore}
    (\p_t \wit + {\bf A}_N - \nu \Delta \wit - \overline {\bf f} )
    \cdot \vittest \, d\x\, d\tau = 0,
  \end{equation} 
  where for convenience, we have set
  \begin{equation*}
    {\bf A}_N := \overline { \g \cdot \big(D_N
      (\wit) \otimes D_N (\wit)\big) }.
  \end{equation*} 
  Therefore, for almost every $t \in [0,T]$, 
  \begin{equation*}
    \mathbb{F}(t, \cdot)
    = (\p_t \wit + {\bf A}_N - \nu \Delta \vit - \overline {\bf f}) (t,
    \cdot) \in L^2(\tore)^3
  \end{equation*}
  is orthogonal to divergence-free vector fields in $L^2(\tore)^3$ and
  De Rham's Theorem applies.  Before going into technical details, let
  us first recall that among all available versions of this theorem,
  the most understandable is the one given by L.~Tartar
  in~\cite{LT75}, a work that has been later reproduced in many other
  papers. Notice also that there is a very elementary proof in the
  periodic case~\cite{RLbook}. Now, from~(\ref{VARFORM2}), we deduce
  that for each Lebesgue point $t$ of $\mathbb{F}$, there is a scalar
  function $q(t, \cdot) \in H^1(\tore)$, such that $\mathbb{F} = - \g
  q$, that one can rephrase as
  \begin{equation}\label{EQ:Final}
    \p_t \wit + {\bf A}_N - \nu \Delta \wit + \g q =
    \overline {\bf f}.
  \end{equation}
  Without loss of generality, one can assume that $\g \cdot {\bf f} =
  0$. Therefore, taking the divergence of equation~(\ref{EQ:Final})
  yields
  \begin{equation*}
    \Delta q = \g \cdot {\bf A}_N,
  \end{equation*}
  which also easily yields $q \in L^2 ([0,T]; H^1(\tore))$,
  closing this part of the construction.  \fin

  \bigskip

  \noindent \textcolor{blue} {\underline{\sc Step 4} : About the
    initial data}.  \medskip

  We must check that $\wit(0, \cdot)$ can be defined and that $\wit(0,
  \cdot) = \overline{\vit_0}$, such as defined
  in~(\ref{Initial}). Thanks to the estimates above (mainly $\p_t \wit
  \in L^2([0,T]; \mathbf{H}_{0})$, together with the regularity about
  $\wit$ that we get) one obviously has $\wit \in C([0,T];
  \mathbf{H}_{1})$.  This allows us to define $\wit(0, \cdot) \in
  \mathbf{H}_{1}$ and also to guarantee
  \begin{equation*}
    \displaystyle \lim_{t \rightarrow 0^+}\|\wit(t, \cdot) -  \wit(0, \cdot)   \|_{ \mathbf{H}_{1}} = 0.
  \end{equation*} 
  It remains to identify $ \wit(0, \cdot)$.  The construction
  displayed in Step 1 yields for $m \in \N$, 
  \begin{equation}\label{INITCOND}
  \wit_\ind (t, \x) = \mathbb{P}_{\ind} (\overline {\vit_0} )(\x) +
  \int_0^t \p_t \wit_\ind (s, \x)\,ds,
  \end{equation}
  an identity which holds in $C^1([0,T] \times \Om)$. Because of the
  weak convergence of $(\p_t \wit_\ind)_{\ind \in \N}$ to $\p_t \wit$
  in $L^2 ([0,T]; \mathbf{H}_{0})$ and thanks to usual properties of
  the projection's operator $\mathbb{P}_{\ind}$, one can easily pass
  to the limit in~(\ref{INITCOND}) in a weak sense in the space $L^2
  ([0,T]; \mathbf{H}_{0})$, to obtain
  \begin{equation*}
    \wit (t, \x) =    \overline {\vit_0} (\x) + \int_0^t \p_t \wit (s, \x)\,ds.
  \end{equation*}
  Therefore, $\wit(0, \x) = \overline {\vit_0} (\x)$, closing the
  question about the initial data.  \fin

 \bigskip

  \noindent\textcolor{blue}{\underline{{\sc Step 5}}: Uniqueness}.

  \medskip

  Let $\wit_1$ and $\wit_2$ be two solutions corresponding to the same
  data $(\bu_0,\,\bff)$ and let us define as usual ${\bf W} := \wit_1-
  \wit_2$. We want to take $AD_N ({\bf W})$ as test function in the
  equation satisfied by ${\bf W}$, because we suspect that this is the
  natural multiplier for an energy equality. But before doing this, we
  must first check that this guy lives in $L^2 ([0,T]\times \tore)^3$
  to be sure that this is a ``legal'' multiplier.  Notice that $AD_N$
  has for symbol $(1+\alpha^2 |{\bf k}|^2) ^2 \rho_{N, {\bf k}}
  \approx (N+1) (1+\alpha^2 |{\bf k}|^2) ^2 / \alpha^2 |{\bf k}|^2
  \approx (N+1) \alpha^2 |{\bf k}|^2 $ for large $|{\bf
    k}|$. Therefore, for each fixed $N\in \N$, $AD_N$ is like a
  Laplacian and makes us loose two derivatives in space. (Only in the
  limit $N\to+\infty$ we will loose four derivatives!) Fortunately,
  ${\bf W}\in L^2([0,T]; {\bf H}_2)$ and therefore $AD_N( {\bf W}) \in
  L^2([0,T]\times \tore)^3$, making this guy a good candidate to be
  the multiplier we need. After using tricks already introduced in
  this paper, we get
  \begin{equation}\label{56LKP0}
    \begin{array}{l} 
      \displaystyle \frac{1}{2}{d \over  dt} \| A^{1 \over 2} D_N ^{1 \over 2}( {\bf W}) \|^2 + \nu
      \| \g A^{1 \over 2} D_N ^{1 \over 2}( {\bf W} )\|^2 \\
      \hskip 4.5cm  \begin{array} {lcl}  & \le
        & | ( (D_N ({\bf W}) \cdot \g ) D_N (\wit_2), D_N ({\bf W}) ) |, 
        \\
        & \le & \| D_N ({\bf W}) \|^2_{L^4(\tore)} \| \g D_N (\wit_2) \|,  
        \\
        & \le & \| D_N ({\bf W}) \|^{1/2} \| \g D_N ({\bf W}) \|^{3/2} \| \g D_N
        ( \wit_2) \|,
      \end{array}
    \end{array} 
  \end{equation}
  where the last line is obtained thanks the well-known
  ``Lady\v{z}henskaya inequality'' for interpolation of $L^4$ with
  $L^2$ and $H^1$, see~\cite[Ch.~1]{La69}. Starting from the last line
  of~(\ref{56LKP0}), we use the inequality $\| D_N( {\bf W}) \|\le
  \|A^{1 \over 2} D_N ^{1 \over 2}( {\bf W})\|$ together with $\| D_N
  (\g {\bf W}) \|\le \|A^{1 \over 2} D_N ^{1 \over 2} (\g {\bf W})\|$,
  the fact that $D_N$ and $\g$ commute, $\|D_N\| = (N+1)$, the bound
  of $\wit_2$ in $L^\infty([0,T]; {\bf H}_1)$, and Young's
  inequality. We obtain
  \begin{equation*}
    \begin{array}{l} 
      \displaystyle \frac{1}{2}{d \over  dt} \| A^{1 \over 2}
      D_N ^{1 \over 2}( {\bf W}) \|^2 + \nu \| \g A^{1 \over 2} D_N ^{1
        \over 2} ({\bf W}) \|^2 
      \\ 
      \displaystyle \hskip 4cm \leq{ 27 (N+1)^4 \sup_{t \ge 0} \| \g \wit_2
        \| \over 32 \nu^3} \| A^{1 \over 2}  D_N ^{1 \over 2} ({\bf W})
      \|^2 + {\nu \over 2} \| \g A^{1 \over 2}
      D_N ^{1 \over 2}( {\bf W}) \|^2.
    \end{array}
  \end{equation*}
  In particular, we get
  \begin{equation*}
    \frac{1}{2}{d \over  dt} \| A^{1 \over 2} D_N ^{1 \over 2}( {\bf W}) \|^2 \le 
    {  27 (N+1) ^4 \sup_{t \ge 0} \| \g \wit_2 \| \over 32 \nu^3} \  \|
    A^{1 \over 2} D_N ^{1 \over 2} ({\bf W}) \|^2. 
  \end{equation*}
  We deduce from Gronwall's Lemma that $A^{1 \over 2} D_N ^{1 \over
    2}({\bf W}) = {\bf 0}$ because $A^{1 \over 2} D_N ^{1 \over
    2}({\bf W}) (0, \cdot) = {\bf 0}$. To conclude that ${\bf W} =
  {\bf 0}$, we must show that the kernel of the operator $A^{1 \over
    2} D_N ^{1 \over 2}$ is reduced to ${\bf 0}$. This is trivial,
  since this operator has for symbol $(1+ \alpha^2 | {\bf k} |^2 )
  \rho_{N, {\bf k} } \approx \alpha | {\bf k} | $ for large values of
  ${\bf k}$. This symbol never vanishes and the equivalence at
  infinity shows that this operator is of same order of $\alpha | \g
  |$. Therefore, it is an isomorphism that maps ${\bf H}_s$ onto ${\bf
    H}_{s-1}$. This concludes that the considered kernel is reduced to
  zero, proving uniqueness of the solution.
\end{proof}
\begin{remark}\label{En:Approx} 
 As we have seen, $AD_N (\wit)$ is a legitimate test function in
 Equation~(\ref{EQ:Final}). When using computation rules already
 detailed in the paper, we get the following energy equality
 satisfied by $A^{1/2} D_N^{1/2} (\wit)$, 
 \begin{equation*}
  \frac{1}{2} {d \over  dt}\| A^{1/2} D_N^{1/2} (\wit) \|^2 + \nu \| \g A^{1/2}
   D_N^{1/2}( \wit) \|^2 = \big(A^{-1/2} D_N^{1/2} ({\bf f}), A^{1/2}
   D_N^{1/2}( \wit)\big).
\end{equation*}
As we shall see in the sequel it seems that it is not possible to pass
to the limit $N\to+\infty$ directly in this ``energy equality'' and
some work to obtain an ``energy inequality'' is needed.
\end{remark}
\section{Passing to the limit when $N \to \infty$}\label{sec:limit}
The aim of this section is the proof of main result of the paper,
namely Theorem~\ref{thm:Principal}.  We now denote, for a given
$N\in\N$ by $(\wit_N, q_N)$, the ``regular weak'' solution to
Problem~\ref{eq:adm1}.  For the sake of completeness and to avoid
possible confusion between the Galerkin index $\ind$ and the
deconvolution index $N$, we write again the system satisfied by
$\bw_N$
\begin{equation} \label{eq:adm-N}
 \begin{array}{rr}
   \partial_t \wit_N +\nabla\cdot(\overline {D_N(\wit_N) \otimes
     D_N(\wit_N)})-\nu\Delta\wit_N+\nabla q_N= \overline \bff & \hbox{in}
   \quad  [0,T]\times \tore, 
   \\
   \nabla\cdot\wit_N =0 & \hbox{in} \quad  [0,T]\times \tore , 
   \\ 
   \wit_N (0, \x) = \overline{\vit_0} (\x)&\hbox{in} \quad
   \tore. \hspace{1.15cm}  \phantom{1} 
 \end{array}
\end{equation}
(More precisely, for all $N\in\N$ we set
$\bw_N=\lim_{\ind\to+\infty}\bw_{\ind,N,\alpha})$, and the scale
$\alpha>0$ is fixed.) We aim to prove that the sequence $\{(\wit_N,
q_N)\}_{N \in \N}$ has a sub-sequence which converges to some $(\wit,
q)$ that is a solution to the averaged Navier-Stokes
Equations~(\ref{eq:adm2}).  Recall that $(A \wit, A q)$ will be a
distributional solution to the Navier-Stokes Equations.  This result
gives a undeniable theoretical support for the study of this ADM
model.

\medskip

We divide the proof into two steps:
\begin{enumerate} 
\item We search additional estimates, uniform in $N$, to get
 compactness properties about the sequences $\{D_N (\wit_N)\}_{N\in\N}$ and
 $\{\wit_N\}_{N\in\N}$;
\item We prove strong enough convergence in order to pass to the limit
  in the equation~\eqref{eq:adm-N}.
\end{enumerate} 
Of course the challenge in this process is to pass to the limit in the
nonlinear term $D_N (\wit_N) \otimes D_N (\wit_N)$. This is why we
seek for an estimate about $\p_t D_N (\wit_N)$, knowing that we
already have some estimate for $D_N (\wit_N)$. This is how we get a
compactness property satisfied by $D_N (\wit_N)$, that we use for
passing to the limit in the nonlinear term.

\bigskip

\noindent \textcolor{blue}{\underline{\sc Step 1 : Additional
    estimates}.}

\medskip

We quote in the following table the estimates we shall use for passing
to the limit. The Table~(\ref{Estimates22}) is organized as the
previous one~\eqref{eq:Estimates}. Recall that -for simplicity- $E_N \in
F$ for any variable $E_N$ and a space $F$, means that the sequence
$\{E_N\}_{N \in \N}$ is bounded in the space $F$.
\begin{equation}\label{Estimates22}  
 \begin{array}
{  @{}>{\vline \hfill  } c @{\, \, \,  }>{\vline
       \hfill} c @{\, \, \,  }>{\vline \hfill } c @{\, \, \,
     }>{\vline \, \, \,  } c   @{\hfill \vline  }} 
   \hline  
   \,\,  \hbox{Label} & \hbox{Variable} &\hbox{   bound          } \hskip 2cm ~  \hfill & 
   \hfill\hbox{order} \quad ~  
   \\ 
   \hline a & \wit_N \, \, \, \, \, \,  & L^\infty([0,T];
   \mathbf{H}_{0}) \cap L^2([0,T];  \mathbf{H}_{1}) \hskip 0,5 cm ~
   &\hskip 0,2cm O(1) 
   \\
   \hline
   b & \wit_N \, \, \, \, \, \,  & L^\infty([0,T];  \mathbf{H}_{1})
   \cap L^2([0,T];  \mathbf{H}_{2})   \hskip 0,5 cm ~& \hskip 0.2cm
   O(\alpha^{-1}) 
   \\
   \hline \, \,   c & D_N (\wit_N) \, \, \, & L^\infty([0,T];
   \mathbf{H}_{0}) \cap L^2([0,T];  \mathbf{H}_{1}) \hskip 0,5 cm ~ &
   \hskip 0,2cm O(1) 
   \\
   \hline \, \,   d &\p_t \wit_N \, \, \, &\, \,  L^2 ([0,T] \times
   \tore)^3   \hskip 1,2cm ~  & \hskip 0.2cm O(\alpha^{-1} ) \hskip
   0.5 cm  
   \\
   \hline \, \,   e & q_N \, \, \, \, \, \,  &\hskip 0,5cm
   L^{2}([0,T];  H^1(\tore)) \cap L^{5/3}([0,T];  W^{2, 5/3}(\tore)) \,   
   &\hskip 0,2cm O(\alpha^{-1} ) \hskip 0.2 cm ~
   \\
   \hline \, \,   f &\, \, \, \p_t D_N(\wit_N) \, \, \, &\, \,
   L^{4/3}([0,T];  {\bf H}_{-1})   \hskip 1,2cm ~  & \hskip 0.2cm O(1 )
   \hskip 0.5 cm 
   \\
   \hline
 \end{array}
\end{equation}
Estimates~(\ref{Estimates22}-a),~(\ref{Estimates22}-b),~(\ref{Estimates22}-c),
and~(\ref{Estimates22}-d) have already been obtained in the previous
section. Therefore, we just have to check~(\ref{Estimates22}-e)
and~(\ref{Estimates22}-f).

\medskip

\underline{\sl Checking~(\ref{Estimates22}-e) } --- As usual, to
obtain regularity properties of the pressure (at least in the
space-periodic case), we take the divergence of~(\ref{EQ:Final})
obtaining
\begin{equation*}
 -\Delta q_N = \g \cdot {\bf A}_N - \g \cdot \overline {\bf f},
\end{equation*}
where we recall that 
\begin{equation*}
 {\bf A}_N = \overline { \g \cdot (D_N (\wit_N) \otimes D_N (\wit_N)) }.
\end{equation*}
Next, since ${\bf f} \in L^2 ([0,T]\times \tore)^3$, then we get $\g
\cdot \overline {\bf f} \in L^2 ([0,T]; H^1(\tore)^3)$. %
We now investigate the regularity of ${\bf A}_N$. We already know from
the estimates proved in the previous section that ${\bf A}_N \in L^2
([0,T] \times \tore)^3$. This yields the first bound in $L^2 ([0,T];
H^1(\tore))$ for $q_N$.

\medskip

We now seek the other estimate for $q_N$. Classical interpolation
inequalities combined with~(\ref{Estimates22}-c) yield $D_N
(\wit_N) \in L^{10/3} ([0,T] \times \tore)$. Therefore, $ {\bf A}_N
\in L^{5/3}([0,T]; W^{1, 5/3}(\tore)) $. Consequently, we obtain
\begin{equation*}
  q_N \in L^2 ([0,T]; H^1(\tore))\cap  L^{5/3}([0,T];  W^{2, 5/3}(\tore)).
\end{equation*}

\medskip

\underline{\sl Checking~(\ref{Estimates22}-f) } --- Let be given
$\vittest \in L^4 ([0,T]; {\bf H}_1)$. We use $D_N (\vittest) \in L^4
([0,T]; {\bf H}_1)$ as test function in the equation satisfied by
$(\wit_N, q_N)$, that is now (by the results previously proved) a
completely justified computation. We get, by using that $\p_t \wit \in
L^2 ([0,T] \times \tore)^3$ (as well as all other guys in the
equation), $D_N$ commutes with differential operators, $G$ and $D_N$
are self-adjoint, the pressure term cancels because $\g \cdot D_N
(\vittest) = 0$, and classical integrations by parts
\begin{equation}\label{RRT4} 
 \begin{aligned}
   (\p_t \wit_N,&\, D_N (\vittest)) = 
   (\p_t D_N(\wit_N), \vittest)  
   \\
   &=\nu (\Delta \wit_N, D_N (\vittest)) - 
   ( D_N (\wit_N) \otimes D_N (\wit_N), \overline {D_N (\g \vittest)}) - 
   (D_N (\overline{ \bf f}), \vittest).  
 \end{aligned}  
\end{equation}
We first observe that 
\begin{equation}\label{FF1} 
 |(\Delta \wit_N, D_N (\vittest))| = |( \g D_N (\wit_N) ,\g \vittest ) | \le 
 C_1 (t) \| \vittest \|_1,
\end{equation}
and we use the $L^2([0,T]; H^1(\tore)^3)$ bound for $D_N(\wit_N)$, to
infer that the function $C_1(t) \in L^2([0,T])$, with a bound uniform
with respect to $N\in\N$. Using $\| D_N (\overline{\bf f}) \| \le \|
{\bf f} \|$ already proved in the previous section and Poincar\'e's
inequality, we handle the term concerning the external forcing as
follows:
\begin{equation}\label{FF2} 
 \big|(D_N (\overline{ \bf f}), \vittest)\big| \le C \,  \| {\bf f} \|
 \, \| \vittest \|_1, 
\end{equation}
$C$ being the Poincar\'e's constant. Finally,
from~(\ref{Estimates22}-c) and usual interpolation inequalities, we
obtain that $D_N (\wit_N)$ belongs to $L^{8/3} ([0,T]; L^4
(\tore)^3)$, which yields
\begin{equation*}
 D_N (\wit_N) \otimes D_N (\wit_N) \in L^{4/3} ([0,T]; L^2 (\tore)^9).
\end{equation*}
Therefore, when we combine the latter estimate with $\|\overline{D_N
 (\g \vittest)}\| \le \| \g \vittest\|$, to get
\begin{equation}\label{FF3} 
  \big|\big( D_N (\wit_N) \otimes D_N (\wit_N), \overline D_N (\g \vittest)\big)\big| 
  \le C_2 (t) \| \vittest \|_1,
\end{equation}
where $C_2 (t) \in L^{4/3} ([0,T])$ and it is uniform in $N\in\N$. The
final result is a consequence of~\eqref{RRT4} combined
with~\eqref{FF1},~\eqref{FF2},~\eqref{FF3}, and $C_1(t) + \| {\bf
 f}(t, \cdot) \| + C_2 (t)\in L^{4/3} ([0,T])$, uniformly with respect to
$N\in\N$.\ \fin

\medskip 

\textcolor{blue}{\underline{\sc Step 2 : Passing to the limit}.}

\medskip

From the above estimates  and classical rules of functional analysis,
we can infer that there exist 
\begin{equation*}
 \begin{aligned}
   &\wit \in L^\infty([0,T];  \mathbf{H}_{1}) \cap L^2([0,T];\mathbf{H}_{2}),
   \\  
   &{\bf z}\in L^\infty([0,T];  \mathbf{H}_{0}) \cap L^2([0,T];\mathbf{H}_{1}),
   \\
   &q \in L^2 ([0,T];  H^1(\tore))\cap L^{5/3}([0,T];W^{2,5/3}(\tore)),
 \end{aligned}
\end{equation*}
such that, up to sub-sequences,
\begin{equation}\label{conv:res}
  \begin{array}{ll} 
    \wit_N \longrightarrow \wit  & 
    \left \{
      \begin{array} {l} 
        \hbox{weakly in }  L^2([0,T];  \mathbf{H}_{2}), 
        \\
        \hbox{weakly$\ast$ in }  L^\infty([0,T];  \mathbf{H}_{1}), 
        \\
        \hbox{strongly in } L^p([0,T]; {\bf H}_1), \quad \forall \, p < \infty, 
      \end{array}  
    \right. 
    \\
    \\
    \p_t \wit_N \longrightarrow \p_t \wit & \hbox{weakly in }
    L^2([0,T] \times \tore),  
    \\
    \\
    D_N(\wit_N) \longrightarrow {\bf z}  & 
    \left \{ 
      \begin{array} {l} 
        \hbox{weakly in }  L^2([0,T];  \mathbf{H}_{1}), 
        \\
        \hbox{weakly$\ast$ in }  L^\infty([0,T];  \mathbf{H}_{0}),          
        \\
        \hbox{strongly in } L^p([0,T] \times \tore)^3, \  \forall \, p < 10/3, 
      \end{array}  
    \right. 
    \\
    \\
    \p_t D_N (\wit_N) \longrightarrow \p_t {\bf z} & \hbox{weakly in }
    L^{4/3}([0,T];\mathbf{H}_{-1}),   
    \\
    \\
    q_N \longrightarrow q & \hbox{weakly in } L^2([0,T];
    H^1(\tore))\cap L^{5/3}([0,T];  W^{2, 5/3}(\tore)).
  \end{array}
\end{equation}
Of course, we have 
\begin{equation}\label{NonLin:8}
 D_N\wit_N \otimes D_N\wit_N \longrightarrow {\bf z} \otimes {\bf z} 
 \quad \hbox{strongly in } L^p([0,T] \times \tore)^9, \quad \forall \,
 p < 5/3. 
\end{equation}
All other terms in the equation pass easily to the limit as well. Our
proof will be complete as soon as we shall have checked that ${\bf z}
= A \wit$, thanks to~(\ref{NonLin:8}). However, this is almost
straightforward as we shall see, the hard job being already done by
the proof of the various estimates.  \par
Let us consider $\vittest \in L^2([0,T]; {\bf H}_2)$. We have $(D_N
(\wit_N) , \vittest) = (\wit_N, D_N (\vittest))$. We claim that $D_N
(\vittest) \to A \vittest$ strongly in $L^2([0,T] \times \tore)^3$. This
will suffice to conclude the proof. Indeed, if such a convergence
result holds, we have using~(\ref{conv:res}) and computational rules
already quoted,
\begin{equation*}
 \begin{CD}
   \big(D_N (\wit_N) ,\vittest\big) @=\big(\wit_N, D_N (\vittest)\big)
   \\
   @VVV @VVV\\
   ({\bf z},\vittest)@. (\bw,A\vittest)
   \\
   @| @|
   \\
   ({\bf z},\vittest)@= (A\bw,\vittest)
 \end{CD}
\end{equation*}
yielding ${\bf z} = A \wit$.\par
It remains to check the claim. We can write
\begin{equation*}
 \vittest = \sum_{\mathbf{k} \in {\cal T}_3^\star}\widehat{
   \vittest}_{\bf k}(t)\,e^{+i\mathbf{k \cdot x}}, 
\end{equation*}
and consequently (see in~\cite{RLbook})
\begin{equation*}
 \| \vittest \|_{L^2([0,T]; {\bf H}_2)}^2 = \sum_{\mathbf{k} \in {\cal T}_3^\star} |{\bf k} |^4
 \int_0^T |\widehat{ \vittest}_{\bf k}(t)|^2dt < \infty.
\end{equation*}
Let $\E >0$ being given. Then, there exists $0<K=K(\vittest)\in\N$ such that
\begin{equation*}
 \sum_{|\mathbf{k}| > K }
 |{\bf k} |^4 \int_0^T |\widehat{ \vittest}_{\bf k}(t)|^2 dt < \frac{\E}{2}.
\end{equation*}
Since $0\leq(1-\rho_{N,\bk})\leq1$, we have 
\begin{equation*}
 \begin{aligned}
   & \int_0^T \| (A-D_N) \vittest \|^2 = \sum_{\mathbf{k} \in {\cal
       T}_3^\star} (1 + \alpha^2 |{\bf k} |^2 )^2 (1- \rho_{N, {\bf
       k}})^2 \int_0^T |\widehat{ \vittest}_{\bf k}(t)|^2 dt,
   \\
   &\qquad= \sum_{0<|\mathbf{k}| \le K } (1 + \alpha^2 |{\bf k} |^2
   )^2 (1- \rho_{N, {\bf k}})^2 \int_0^T |\widehat{ \vittest}_{\bf
     k}(t)|^2 dt
   \\
   &\qquad\quad+ \sum_{|\mathbf{k}| > K }(1 + \alpha^2 |{\bf k} |^2
   )^2 (1- \rho_{N, {\bf k}})^2 \int_0^T |\widehat{ \vittest}_{\bf
     k}(t)|^2 dt, 
   \\& 
   \qquad< \sum_{0<|\mathbf{k}| \le K } (1 +
   \alpha^2 |{\bf k} |^2 )^2 (1- \rho_{N, {\bf k}})^2 \int_0^T
   |\widehat{ \vittest}_{\bf k}(t)|^2 dt + \, \frac{\E}{2}.
 \end{aligned}
\end{equation*}
Observe that -- for each given ${\bf k} \in {\cal T}_3^\star$ -- we have
$\rho_{N, {\bf k}} \to 1$ when $N \to \infty$. Therefore, there exists
$N_0\in\N$ (obviously depending on $\vittest$ and on $K$) such that for all
$N > N_0$,
\begin{equation*}
 \sum_{|\mathbf{k}| \le K }   (1 + \alpha^2 |{\bf k} |^2 )^2 (1-
 \rho_{N, {\bf k}})^2 
 \int_0^T |\widehat{ \vittest}_{\bf k}(t)|^2 dt <\frac{\E}{2}.
\end{equation*}
We thus obtained that
\begin{equation*}
  \forall\,\E>0\quad  \exists\,N_0=N_0(\vittest)\in\N:\quad
  \| (A-D_N) \vittest  \|_{L^2([0,T]; {\bf H}_2)}^2 < \E,\quad\forall\,N>N_0,
\end{equation*}
ending the proof. \fin

\medskip

\begin{remark}
 Let $(\vit_N, p_N) = (D_N (\wit_N), D_N (q_N))$, and define $(\vit,
 p):=(A \wit, A q)$. Our proof also shows that the field $(\vit_N,
 p_N)$ satisfies the equation
 \begin{equation}
   \label{eq:nseN}
   \begin{aligned}
     \partial_t \vit_N+ (D_N \circ G) \left ( \nabla\cdot(\vit_N\otimes
       \vit_N) \right )  - \nu\Delta \vit_N+\nabla p_N = (D_N \circ G) ({\bf f}),
     \\
     \nabla\cdot\vit_N=0, \\
     \vit_N (0, \x) = (D_N \circ G) (\vit_0) (\x). 
   \end{aligned}
 \end{equation}
 This equation is consistent with the convergence result, since $D_N
 \circ G \to {\rm Id}$, and the proof contains the fact that $(\vit,
 p)=\lim_{N\to+\infty} (\wit_N,q_N)$ is at least a distributional
 solution of the Navier-Stokes Equations~\eqref{eq:nse}. We also
 recall that the energy equality holds (see Remark~\ref{En:Approx}),
 for the solution $(\bw_N,q_N)$ of the ADM model~\eqref{eq:adm-N}.
\end{remark}
We now prove that the solution $\bw$ satisfies an ``energy
inequality.'' Observe that, from the previous estimates, we obtained
(as a consequence of the lower bound on the operator $D_N$) that
\begin{equation*}
 \begin{aligned}
   &\bw_N\in L^2([0,T]; \mathbf{H}_{2}) \quad\text{uniformly in $N$}
   \\
   &\bw_N\in L^\infty([0,T]; \mathbf{H}_{2}) \quad\text{NON uniformly
     in $N$},
 \end{aligned}
\end{equation*}
hence obtaining an estimate for $\bw$ in $L^\infty(0,T;
\mathbf{H}_{2})$ is not trivial at all since it does not derive
directly from the various estimates collected in the tables.
\begin{proposition}
  Let be given $\bu_0\in \mathbf{H}_{0}$ and $\bff\in
  L^2([0,T];\mathbf{H}_{0})$, and let $\{(\bw_N,q_N)\}_{N\in\N}$ be a
  (possibly relabelled) sequence of regular weak solutions converging
  to a weak solution $(\bw,q)$ of the filtered Navier-Stokes
  equations. Then, $\bw$ satisfies the energy inequality
 \begin{equation*}
   \frac{1}{2}    {d \over dt}\| A \wit \|^2 + 
   \nu \| \g A \wit \|^2 \le ({\bf f}, A \wit),
 \end{equation*}
 in the sense of distributions, that is 
 \begin{equation*}
   - \frac{1}{2}\int_0^T   \| A \wit(s) \|^2\phi'(s)\,ds + 
   \nu \int_0^T\| \g A \wit(s) \|^2\,\phi(s)\,ds \le \int_0^T({\bf
     f}(s), A \wit(s))\,\phi(s)\,ds, 
     \end{equation*}
     for all $\phi\in C^\infty_0(0,T)$ such that $\phi\geq0$ (From
     this one can derive the more familiar integral
     formulation~\eqref{eq:energy-final}. See~\cite{CF1988,FMRT2001a,Tem2001}). This
     implies that $\bw$ is the average of a weak (in the sense of
     Leray-Hopf) or dissipative solution $\bu$ of the Navier-Stokes
     equation~\eqref{eq:nse}. In fact, the energy inequality can also
     be read also as
 \begin{equation*}
   \frac{1}{2}{d \over  dt}\| \vit \|^2 + \nu \| \g A \vit \|^2 \le ({\bf f},  \vit). 
 \end{equation*}
 If we assume less regularity on the external force, as for instance
 $\bff\in L^2([0,T];\mathbf{H}_{-1})$, the proof remains the same and
 we obtain the corresponding inequality
 \begin{equation*}
   \frac{1}{2}{d \over  dt}\| A \wit \|^2 + 
   \nu \| \g A \wit \|^2 \le \langle{\bf f}, A \wit\rangle.
 \end{equation*}
\end{proposition}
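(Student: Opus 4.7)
The plan is to integrate the energy \emph{equality}
\begin{equation*}
\frac{1}{2}\frac{d}{dt}\|A^{1/2}D_N^{1/2}\bw_N\|^2 + \nu\|\g A^{1/2}D_N^{1/2}\bw_N\|^2 = \big(A^{-1/2}D_N^{1/2}\bff,\, A^{1/2}D_N^{1/2}\bw_N\big)
\end{equation*}
from Remark~\ref{En:Approx} against a non-negative test function $\phi \in C^\infty_0(0,T)$, integrate by parts in time, and pass to the limit $N\to\infty$ in the resulting identity
\begin{equation*}
-\tfrac{1}{2}\int_0^T \|A^{1/2}D_N^{1/2}\bw_N\|^2 \phi'(s)\,ds + \nu\int_0^T \|\g A^{1/2}D_N^{1/2}\bw_N\|^2 \phi(s)\,ds = \int_0^T \big(A^{-1/2}D_N^{1/2}\bff,\, A^{1/2}D_N^{1/2}\bw_N\big)\phi(s)\,ds.
\end{equation*}

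Two of the three terms should be routine. For the dissipation integral I would rely on the fact that $\g A^{1/2}D_N^{1/2}\bw_N \rightharpoonup \g A\bw$ weakly in $L^2([0,T]\times\tore)$, which follows by the symbol computation at the end of the proof of Theorem~\ref{thm:Principal} (the symbol $(1+\alpha^2|\bk|^2)\rho_{N,\bk}^{1/2}$ converges pointwise to $1+\alpha^2|\bk|^2$ and is dominated by the symbol of $A$); since $\phi\ge 0$, weak lower semicontinuity of the $L^2$ norm yields $\liminf_N \ge \nu\int \|\g A\bw\|^2\phi$. For the force term, the symbol $\rho_{N,\bk}^{1/2}$ of $A^{-1/2}D_N^{1/2}$ lies in $[0,1]$ and converges pointwise to $1$, so dominated convergence in Fourier series yields $A^{-1/2}D_N^{1/2}\bff \to \bff$ strongly in $L^2([0,T]\times\tore)$; pairing this with the weakly convergent $A^{1/2}D_N^{1/2}\bw_N$ against the fixed factor $\phi$ sends the right-hand side to $\int_0^T (\bff, A\bw)\phi$.

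The delicate point, and the main obstacle I foresee, is the term carrying $\phi'$: because $\phi'$ changes sign, weak lower semicontinuity is useless, and I must establish the \emph{strong} convergence $A^{1/2}D_N^{1/2}\bw_N \to A\bw$ in $L^2([0,T];L^2(\tore))$. My plan is to invoke Aubin--Lions. The energy equality itself already supplies a uniform bound of $A^{1/2}D_N^{1/2}\bw_N$ in $L^2([0,T]; H^1(\tore))$; moreover, since the symbol of $A^{1/2}D_N^{1/2}$ is dominated by $1+\alpha^2|\bk|^2$ uniformly in $N$, this operator maps $L^2$ into $\bH_{-2}$ with operator norm independent of $N$, so the bound on $\p_t \bw_N$ in $L^2([0,T];L^2(\tore))$ from Table~\eqref{Estimates22}-d yields $\p_t(A^{1/2}D_N^{1/2}\bw_N)$ uniformly bounded in $L^2([0,T]; \bH_{-2})$. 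The compact embedding $H^1 \hookrightarrow L^2$ combined with Aubin--Lions then gives strong relative compactness in $L^2([0,T];L^2(\tore))$; the limit is identified as $A\bw$ by testing against smooth $\vittest$ and using $A^{1/2}D_N^{1/2}\vittest \to A\vittest$ strongly via the same symbol argument. Strong convergence in $L^2([0,T];L^2)$ propagates to $\|A^{1/2}D_N^{1/2}\bw_N\|^2 \to \|A\bw\|^2$ in $L^1([0,T])$, which handles the $\phi'$-integral.

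Collecting the three limits yields, for every nonnegative $\phi \in C^\infty_0(0,T)$,
\begin{equation*}
-\tfrac{1}{2}\int_0^T \|A\bw\|^2 \phi'(s)\,ds + \nu\int_0^T \|\g A\bw\|^2 \phi(s)\,ds \le \int_0^T (\bff, A\bw)\phi(s)\,ds,
\end{equation*}
which is the claimed distributional energy inequality. Setting $\bu = A\bw$ rewrites it as the standard Leray--Hopf dissipation estimate $\tfrac{1}{2}\tfrac{d}{dt}\|\bu\|^2 + \nu \|\g \bu\|^2 \le (\bff,\bu)$, so that $(\bu,p)=(A\bw, Aq)$ is a dissipative weak solution of~\eqref{eq:nse}. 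The case $\bff \in L^2([0,T];\bH_{-1})$ is handled by the same argument after replacing the scalar products by duality pairings, which is compatible with the bound $A^{-1/2}D_N^{1/2}\bff \to \bff$ derived exactly as above.
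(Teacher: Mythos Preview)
Your argument is correct, and it follows a genuinely different path from the paper's.

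The paper works with the \emph{integral} form of the energy equality at each fixed $t$,
\[
\tfrac12\|A^{1/2}D_N^{1/2}\bw_N(t)\|^2 + \nu\!\int_0^t\|\g A^{1/2}D_N^{1/2}\bw_N\|^2
= \tfrac12\|A^{1/2}D_N^{1/2}\bw_N(0)\|^2 + \int_0^t\big(A^{-1/2}D_N^{1/2}\bff,\,A^{1/2}D_N^{1/2}\bw_N\big),
\]
passes to the limit on the right-hand side exactly as you do (strong convergence of $A^{-1/2}D_N^{1/2}\bff$ paired with weak convergence of $A^{1/2}D_N^{1/2}\bw_N$), and then handles the left-hand side by weak lower semicontinuity of \emph{both} terms: the dissipation integral via weak $L^2([0,T];\mathbf{H}_1)$ convergence, and the kinetic term $\|A^{1/2}D_N^{1/2}\bw_N(t)\|^2$ via weak$\ast$ $L^\infty([0,T];\mathbf{H}_0)$ convergence. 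A $\limsup+\liminf\le\limsup$ splitting is used to separate the two terms. No strong convergence of $A^{1/2}D_N^{1/2}\bw_N$ is ever invoked.

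You instead work in the distributional formulation from the outset, where the presence of $\phi'$ (which changes sign) blocks lower semicontinuity for the kinetic term. Your remedy---an Aubin--Lions step based on the uniform $L^2([0,T];\mathbf{H}_1)$ bound together with the observation that $A^{1/2}D_N^{1/2}\,\partial_t\bw_N$ stays bounded in $L^2([0,T];\mathbf{H}_{-2})$ (because the symbol $(1+\alpha^2|\bk|^2)\rho_{N,\bk}^{1/2}\le 1+\alpha^2|\bk|^2$ uniformly in $N$)---is correct and gives strong $L^2([0,T];L^2)$ convergence, hence $L^1(0,T)$ convergence of the squared norms. This buys you a clean passage in the $\phi'$-integral at the price of one extra compactness argument. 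The paper's route avoids that compactness step and lands directly on the integral inequality~\eqref{eq:energy-final} for every $t$, which is slightly stronger than the distributional form you obtain first; you then recover~\eqref{eq:energy-final} by the standard argument.
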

\begin{proof}
  We proved that a (relabelled) sub-sequence $\{\bw_N\}_{n\in\N}$
  converges to a vector field $\bw$, which is solution in the sense of
  distributions of the filtered Navier-Stokes equations. The final
  step is to prove that the solution $\bw$ satisfies the ``energy
  inequality.'' In fact observe that the \textit{a priori} estimate we
  proved show that
 \begin{equation*}
   \bw\in L^2([0,T];\mathbf{H}_2)\ \Longleftrightarrow\ \bu\in
   L^2([0,T];\mathbf{H}_0),
 \end{equation*}
 and consequently the solution $\bu$ belongs to $L^2(\tore)$, for
 a.e. $t\in[0,T]$. We need now to show that the $L^2$-norm is bounded
 uniformly and that an energy balance holds. To this end we recall
 that the following energy equality (see Remark~\ref{En:Approx}),
 is satisfied, for each $N\geq0$, by the solutions of~\eqref{eq:adm-N}
 \begin{equation*}
   \frac{1}{2}{d  \over  dt}\| A^{1/2} D_N^{1/2}( \wit_N) \|^2 + \nu \| \g A^{1/2}
   D_N^{1/2}( \wit_N) \|^2 = \big(A^{-1/2} D_N^{1/2}( {\bf f}), A^{1/2} D_N^{1/2}
   (\wit_N)\big)
 \end{equation*}
 (The previous inequality has to be intended in the sense of
 distributions.)  What is needed is to pass to the limit as
 $N\to+\infty$. First, let us write the equality in integral form,
 since it will be most useful for our purposes: for all $t\in[0,T]$ 
 \begin{equation}
   \label{eq:Energy34} 
   \begin{aligned}
     &\frac{1}{2}\| A^{1/2} D_N^{1/2}( \wit_N)(t) \|^2 + \nu \int_0^t\|
     \g A^{1/2} D_N^{1/2}( \wit_N) (s)\|^2\,ds
     \\
     &\hspace{2cm} =\frac{1}{2} \| A^{1/2} D_N^{1/2} (\wit_N)(0) \|^2 +\int_0^t
     \big(A^{-1/2} D_N^{1/2}( {\bf f}), A^{1/2} D_N^{1/2}( \wit_N)\big)\,ds.
   \end{aligned}
 \end{equation}
 Observe now that, by the estimates in~(\ref{eq:Estimates}-b) we have
 established the following convergence
 \begin{equation}\label{eq:convergence-energy}
   \begin{aligned}
     & A^{1/2}D_N^{1/2}(\bw_N)\to A^{1/2} A^{1/2}(\bw) \quad\text{weakly
       in } L^2([0,T];\mathbf{H}_{1}),
     \\
     & A^{1/2}D_N^{1/2}(\bw_N)\to A^{1/2} A^{1/2}(\bw) \quad\text{weakly$\ast$
       in } L^\infty([0,T];\mathbf{H}_{0}).
  \end{aligned}
 \end{equation}
 This follows since, we are working now on a sequence $\{\bw_N\}_{N\in\N}$
 which is convergent in all the spaces previously used. In
 particular, we have that $\{A^{1/2}D_N^{1/2}(\bw_N)\}_{N\in\N}$
 converges to ``something,'' weakly in $L^2([0,T];\mathbf{H}_{1})$,
 and also $\{A^{1/2}D_N^{1/2}(\bw_N)\}_{N\in\N}$ has a
 $L^\infty([0,T];\mathbf{H}_{0})$ weak$\ast$ limit.  Finally, since we
 previously showed that
 \begin{equation*}
   \begin{aligned}
     &D_N(\bw_N)\to A\bw \qquad \text{strongly in }L^p([0,T]\times\tore),\qquad
     \forall\, p<10/3,
     \\
     &D_N(\bw_N)\to A\bw \qquad \text{weakly in }L^2([0,T];\mathbf{H}_0),
   \end{aligned}
 \end{equation*}
 the same arguments as before show also that
 \begin{equation*}
   D_N^{1/2}(\bw_N)\to A^{1/2}(\bw) \qquad \text{weakly in }L^2([0,T];\mathbf{H}_1),
 \end{equation*}
 and the uniqueness of the weak limit proves~\eqref{eq:convergence-energy}.\par
 Next, due to the assumptions on $\bff$ we have
 \begin{equation*}
   A^{-1/2}D^{1/2}_N\bff\to \bff\qquad\text{strongly in }L^2([0,T];\mathbf{H}_0).
 \end{equation*}
 In addition, since for all $N\in \N$, $\bw_N(0)=\bw(0)=\obu(0)\in
 \mathbf{H}_2$, and since the integral involving the external force $\bff$
 consists one term weakly converging, times another one strongly
 converging, it holds that
 \begin{equation*}
   \begin{aligned}
     \lim_{N\to+\infty}\bigg[\frac{1}{2} \| A^{1/2} D_N^{1/2} (\wit_N)(0)
     \|^2 +\int_0^t (A^{-1/2} D_N^{1/2}( {\bf f}), A^{1/2} D_N^{1/2}
     ( \wit_N))\,ds\bigg]
     \\
     \hspace{5cm}=\frac{1}{2} \| A\wit(0) \|^2 +\int_0^t ({\bf
       f}, A \wit)\,ds.
   \end{aligned}
 \end{equation*}
 The previous limit implies that the left-hand side
 of~\eqref{eq:Energy34} is bounded uniformly in $N\in\N$ since
 \begin{equation*}
   \begin{aligned}
     \limsup_{N\to+\infty}    \bigg[\frac{1}{2}\| A^{1/2} &D_N^{1/2}
   (\wit_N)(t) \|^2 + 
   \nu \int_0^t\|\g A^{1/2} D_N^{1/2}( \wit_N) (s)\|^2\,ds\bigg]
   \\
   & \leq\liminf_{N\to+\infty}\left[\frac{1}{2} \| A^{1/2} D_N^{1/2}(
     \wit_N)(0) \|^2 
     +\int_0^t(A^{-1/2} D_N^{1/2}( {\bf f}), A^{1/2} D_N^{1/2} (\wit_N))\,ds\right]
   \\
   & =\lim_{N\to+\infty}\left[\frac{1}{2} \| A^{1/2} D_N^{1/2}(
     \wit_N)(0) \|^2 
     +\int_0^t (A^{-1/2} D_N^{1/2} ({\bf f}), A^{1/2} D_N^{1/2}( \wit_N))\,ds\right]
   \\
   &=\frac{1}{2} \| A\wit(0) \|^2 +\int_0^t
   ({\bf f}(s), A \wit(s))\,ds.
 \end{aligned}
\end{equation*}
Next, we use the elementary inequality for the real valued sequences
$\{a_N\}_{n\in\N}$ and $\{b_N\}_{n\in\N}$
\begin{equation*}
 \limsup_{N\to+\infty} a_N+\liminf_{N\to+\infty}
 b_N\leq\limsup_{N\to+\infty} (a_N+b_N),
\end{equation*}
with
\begin{equation*}
 \begin{aligned}
   a_N:=\frac{1}{2}\| A^{1/2} D_N^{1/2} (\wit_N)(t)\|^2
   \quad\text{and}\quad b_N=\nu \int_0^t\| \g A^{1/2}
   D_N^{1/2}(\wit_N) (s)\|^2\,ds.
 \end{aligned}
\end{equation*}
(The inequality holds since we know in advance that the
right-hand side is finite.) We infer that
\begin{equation*}
 \begin{aligned}
   \limsup_{N\to+\infty} \frac{1}{2}\| A^{1/2} D_N^{1/2}
   (\wit_N)(t)\|^2
   &+ \liminf_{N\to+\infty}\nu \int_0^t\| \g A^{1/2} D_N^{1/2}(
   \wit_N) (s)\|^2\,ds
   \\
   &\leq\frac{1}{2} \| A\wit(0) \|^2 +\int_0^t ({\bf f}(s), A \wit(s))\,ds.
 \end{aligned}
\end{equation*}
By lower semi-continuity of the norm this implies that
\begin{equation*}
 \int_0^t\|  \g A \wit (s)\|^2\,ds  \leq
 \liminf_{N\to+\infty}\int_0^t\|  \g A^{1/2} D_N^{1/2} \wit_N
 (s)\|^2\,ds.
\end{equation*}
On the other hand, since $D^{1/2}\bw_N\to A^{1/2}\bw$ weakly$\ast$
$L^\infty([0,T];\mathbf{H}_0)$. Again by identification of the weak limit we
get
\begin{equation*}
 \| A\wit(t) \|^2\leq  \limsup_{N\to+\infty}   \| A^{1/2} D_N^{1/2} \wit_N(t) \|^2.
\end{equation*}
By collecting all the estimates, we have finally proved that, for all $t\in[0,T]$,
\begin{equation}\label{eq:energy-final}
 \frac{1}{2}\| A\wit(t) \|^2+\nu \int_0^t\| \g A \wit
   (s)\|^2\,ds \leq \frac{1}{2} \| A\wit(0) \|^2 +\int_0^t ({\bf
     f}(s), A \wit(s))\,ds.
\end{equation}
This can be read as the standard energy inequality for $\bu=A\bw$.
\begin{equation*}
 \begin{aligned}
   \frac{1}{2}\| \bu(t) \|^2+\nu \int_0^t\| \g \bu (s)\|^2\,ds\leq
   \frac{1}{2} \| \bu(0) \|^2 +\int_0^t ({\bf f}(s),
   \bu(s))\,ds,\qquad\forall\, t\in[0,T].
 \end{aligned}
\end{equation*}
\end{proof}
\begin{remark}
 With slightly more effort (the techniques are the usual ones as for
 the Navier-Stokes equations, see e.g.,~\cite{CF1988}) we can prove
 the same inequality, between $t_0$ and $t$, where $t_0$ is any
 Lebesgue point of the function $\|A\bw(t)\|$: For a.e. $t_0\in[0,T]$
 it holds
 \begin{equation*}
   \frac{1}{2}\| A\wit(t) \|^2+\nu  \int_{t_0}^t\|
   \g A \wit (s)\|^2\,ds\leq \frac{1}{2} \| A\wit(t_0) \|^2 +\int_{t_0}^t
   ({\bf f}(s), A \wit(s))\,ds,\quad\forall\, t\in[t_0,T].
 \end{equation*}
\end{remark}
%

\begin{thebibliography}{10}

\bibitem{AS2001}
N.~A. {A}dams and S.~Stolz.
\newblock {\em Deconvolution methods for subgrid-scale approximation in large
  eddy simulation}.
\newblock Modern Simulation Strategies for Turbulent Flow, R.T. Edwards, 2001.

\bibitem{BIL2006}
L.~C. Berselli, T.~Iliescu, and W.~J. Layton.
\newblock {\em Mathematics of {L}arge {E}ddy {S}imulation of turbulent flows}.
\newblock Scientific Computation. Springer-Verlag, Berlin, 2006.

\bibitem{CLT2006}
Y.~Cao, E.~M. Lunasin, and E.~S. Titi.
\newblock Global well-posedness of the three-dimensional viscous and inviscid
  simplified {B}ardina turbulence models.
\newblock {\em Commun. Math. Sci.}, 4(4):823--848, 2006.

\bibitem{CHOT05}
A.~{C}heskidov, D.~D. Holm, E.~Olson, and E.~S. Titi.
\newblock On a {L}eray-$\alpha$ model of turbulence.
\newblock {\em Royal Society London, Proceedings, Series A, Mathematical,
  Physical and Engineering Sciences}, 461:629--649, 2005.

\bibitem{CF1988}
P.~Constantin and C.~Foias.
\newblock {\em Navier-{S}tokes equations}.
\newblock Chicago Lectures in Mathematics. University of Chicago Press,
  Chicago, IL, 1988.

\bibitem{DG1995}
C.~R. Doering and J.~D. Gibbon.
\newblock {\em Applied analysis of the {N}avier-{S}tokes equations}.
\newblock Cambridge Texts in Applied Mathematics. Cambridge University Press,
  Cambridge, 1995.

\bibitem{DE2006}
A.~Dunca and Y.~Epshteyn.
\newblock On the {S}tolz-{A}dams deconvolution model for the large-eddy
  simulation of turbulent flows.
\newblock {\em SIAM J. Math. Anal.}, 37(6):1890--1902 (electronic), 2006.

\bibitem{FHT01}
C.~Foias, D.~D. Holm, and E.~S. Titi.
\newblock The {N}avier-{S}tokes-alpha model of fluid turbulence.
\newblock {\em Physica D}, 152:505--519, 2001.

\bibitem{FMRT2001a}
C.~Foias, O.~Manley, R.~Rosa, and R.~Temam.
\newblock {\em Navier-{S}tokes equations and turbulence}, volume~83 of {\em
  Encyclopedia of Mathematics and its Applications}.
\newblock Cambridge University Press, Cambridge, 2001.

\bibitem{Geu2003}
B.~J. Geurts.
\newblock {\em Elements of {D}irect and {L}arge {E}ddy {S}imulation}.
\newblock Edwards Publishing, Flourtown, PA, 2003.

\bibitem{La69}
O.~A. Ladyzhenskaya.
\newblock {\em The mathematical theory of viscous incompressible flow}.
\newblock Second English edition, revised and enlarged. Translated from the
  Russian by Richard A. Silverman and John Chu. Mathematics and its
  Applications, Vol. 2. Gordon and Breach Science Publishers, New York, 1969.

\bibitem{LL03}
W.~{L}ayton and {R. Lewandowski}.
\newblock A simple and stable scale similarity model for large eddy simulation:
  energy balance and existence of weak solutions.
\newblock {\em Applied Math. Letters}, 16:1205--1209, 2003.

\bibitem{LL2006a}
W.~J. Layton and R.~Lewandowski.
\newblock On a well-posed turbulence model.
\newblock {\em Discrete Contin. Dyn. Syst. Ser. B}, 6(1):111--128 (electronic),
  2006.

\bibitem{LL2006b}
W.~J. Layton and R.~Lewandowski.
\newblock Residual stress of approximate deconvolution models of turbulence.
\newblock {\em J. Turbul.}, 7:Paper 46, 21 pp. (electronic), 2006.

\bibitem{LL2008}
W.~J. Layton and R.~Lewandowski.
\newblock A high accuracy {L}eray-deconvolution model of turbulence and its
  limiting behavior.
\newblock {\em Anal. Appl. (Singap.)}, 6(1):23--49, 2008.

\bibitem{JL34}
J.~Leray.
\newblock Sur les mouvements d'une liquide visqueux emplissant l'espace.
\newblock {\em {A}cta {M}ath.}, 63:193--248, 1934.

\bibitem{LMC2005}
M.~Lesieur, O.~M\'etais, and P.~Comte.
\newblock {\em Large-eddy simulations of turbulence}.
\newblock Cambridge University Press, New York, 2005.
\newblock With a preface by James J. Riley.

\bibitem{RL09}
R.~{L}ewandowski.
\newblock On a continuous deconvolution equation for turbulence models.
\newblock {\em Lecture Notes of Ne\c cas Center for Mathematical Modeling},
  5:62--102, 2009.

\bibitem{RLbook}
R.~Lewandowski.
\newblock {\em Approximations to the {N}avier-{S}tokes {E}quations}.
\newblock In preparation, 2010.

\bibitem{Lio1969}
J.-L. Lions.
\newblock {\em Quelques m\'ethodes de r\'esolution des probl\`emes aux limites
  non lin\'eaires}.
\newblock Dunod, 1969.

\bibitem{Sag2001}
P.~Sagaut.
\newblock {\em Large eddy simulation for incompressible flows}.
\newblock Scientific Computation. Springer-Verlag, Berlin, 2001.
\newblock An introduction, With an introduction by Marcel Lesieur, Translated
  from the 1998 French original by the author.

\bibitem{SA99}
S.~Stolz and N.~A. Adams.
\newblock An approximate deconvolution procedure for large-eddy simulation.
\newblock {\em Phys. Fluids}, 11(7):1699--1701, 1999.

\bibitem{LT75}
L.~Tartar.
\newblock {\em Nonlinear partial differential equations using compactness
  method}.
\newblock report nb 1584, Mathematics Research Center, University of Wisconsin,
  Madison, 1975.

\bibitem{Tem2001}
R.~Temam.
\newblock {\em Navier-{S}tokes equations}.
\newblock AMS Chelsea Publishing, Providence, RI, 2001.
\newblock Theory and numerical analysis, Reprint of the 1984 edition.

\end{thebibliography}
\def\ocirc#1{\ifmmode\setbox0=\hbox{$#1$}\dimen0=\ht0 \advance\dimen0
  by1pt\rlap{\hbox to\wd0{\hss\raise\dimen0
  \hbox{\hskip.2em$\scriptscriptstyle\circ$}\hss}}#1\else {\accent"17 #1}\fi}
  \def\polhk#1{\setbox0=\hbox{#1}{\ooalign{\hidewidth
  \lower1.5ex\hbox{`}\hidewidth\crcr\unhbox0}}}

\end{document}